\newtheorem{theorem}{Theorem}[section]
\newtheorem{lemma}[theorem]{Lemma}
\newtheorem{proposition}[theorem]{Proposition}
\newtheorem{corollary}[theorem]{Corollary}
\newtheorem{conjecture}[theorem]{Conjecture}
\theoremstyle{definition}
\newtheorem{remark}[theorem]{Remark}
\newtheorem{question}[theorem]{Question}
\newcommand{\R}{{\mathbb{R}}}
\newcommand{\C}{{\mathbb{C}}}
\newcommand{\N}{{\mathbb{N}}}
\newcommand{\ehzcap}{c_{_{\rm EHZ}}}
\newcommand{\dg}{\dot{\gamma}}
\newcommand{\dz}{\dot{z}}
\newcommand{\ceil}[1]{\lceil{#1}\rceil}
\newcommand{\floor}[1]{\lfloor{#1}\rfloor}
\def\vol{{\rm Vol}}
\begin{document}
	\title {Remarks on symplectic capacities of $p$-products}
	\author{P. Haim-Kislev, Y. Ostrover}
	\maketitle
	\begin{abstract}  
	In this note we study the behavior of symplectic capacities of convex domains in the classical phase space with respect to symplectic $p$-products.
As an application, by using a ``tensor power trick", we show that it is enough to prove  the weak version of Viterbo's  volume-capacity conjecture in the asymptotic regime, i.e., when the dimension is sent to infinity.  In addition, we introduce a conjecture about higher-order capacities of $p$-products, and show that if it holds, then there are no non-trivial $p$-decompositions of the symplectic ball.
	\end{abstract}
	
\section{Introduction and Results}

Symplectic capacities are numerical invariants that serve as a fundamental tool in the study of symplectic rigidity phenomena. Most known examples are closely related to Hamiltonian dynamics, and can moreover be used to study the 
existence and the behavior of periodic orbits of certain Hamiltonian systems. We refer the reader to~\cite{CHLS, McD1} %,Sch1} 
for more information on symplectic capacities, their properties, and the role that they play in symplectic topology.

In this note we consider symplectic capacities of convex subsets of the classical phase space $({\mathbb R}^{2n},\omega_n)$, and study their behavior with respect to ``symplectic $p$-products" (cf.~\cite{Ke-Li,OR}). More precisely, recall that the Cartesian product $M \times N$ of two symplectic manifolds $(M,\omega_M)$, $(N, \omega_N)$
has a natural symplectic structure given by $\omega = \pi_M^* \omega_M + \pi_N^* \omega_N$, where $\pi_M,\pi_N$ are the two natural projections. 
For two convex bodies $K \subset {\mathbb R}^{2n}$ and $T \subset {\mathbb R}^{2m}$, which contain the origin of their respective ambient spaces, and $1 \leq p \leq \infty$, we consider the well-known $p$-product operation 
\begin{equation*} K \times_p T := \bigcup_{0 \leq t \leq 1} \Bigl (   (1-t)^{1/p}K \times t^{1/p}T \Bigr) \subset {\mathbb R}^{2n} \times {\mathbb R}^{2m}. \end{equation*}
Since the above definition of the $p$-product of two bodies is naturally applicable only when the bodies contain the origins of their respective ambient spaces, from now on we will assume that all convex bodies contain the origin unless specifically stated otherwise.
Note that $K \times_\infty T = K \times T$, and that $K \times_1 T = {\rm conv} \{(K \times 0) \cup  ( 0 \times T) \}$, where ${\rm conv}$ stands for the convex hull of a set. The 1-product $ K \times_1 T$ is also known as the {\it free sum} of $K$ and $T$, and is related to the product operation via duality (see e.g.,~\cite{HRGZ}).  We remark that 
the norm naturally associated with $K \times_p T$ satisfies
$$ \|(x,y)\|^p_{K \times_p T}  = \|x\|^p_K + \|y\|^p_T, \ \ {\rm and} \ \ \|(x,y)\|_{K \times_\infty T} = \max \{ \|x\|_K, \|y\|_T \}.$$
Moreover, let 
$h_L : {\mathbb R}^{d} \rightarrow {\mathbb R}$ be the support function associated with a convex body $L$ in ${\mathbb R}^d$ (see the notations given at the end of this section). Then, the support function of $K \times_p T$  satisfies
	$$ h_{K \times_p T}(x,y)^q = h_K(x)^q + h_T(y)^q, \ \ {\rm and} \ \ h_{K \times_1 T} = \max \{ h_K(x), h_T(y) \}, $$
	where $\frac{1}{p} + \frac{1}{q} = 1$ and $p,q \geq 1$.

\subsection{The systolic ratio of symplectic $p$-products}

For a convex domain $K \subset {\mathbb R}^{2n}$, it is known that many normalized symplectic capacities, including the first Ekeland-Hofer capacity $c_{\rm EH}^1$~\cite{EH1,EH2}, the Hofer-Zehnder capacity $c_{\rm HZ}$~\cite{HZ1}, the symplectic homology capacity $c_{\rm SH}$~\cite{V2}, and the first Gutt-Hutchnigs capacity $c^1_{\rm GH}$~\cite{Gu-Ha}, coincide. Moreover, when $K$ is smooth, all these capacities are given  
by the minimal action among all the closed characteristics on the boundary $\partial K$\footnote[1]{If the boundary of $K$ is not smooth, the above capacities coincide with the minimal action among ``generalized closed characteristics", as explained, e.g., in~\cite{AA-O1}.}. The above claims follow from a combination of results from~\cite{Ab-Ka,Gi-Sh, Gu-Ha, HZ1,Ir}. In what follows, for a convex domain $K$ in  ${\mathbb R}^{2n}$ we shall denote the above mentioned coinciding capacities by $\ehzcap(K)$.
The systolic ratio $$ {sys_n}(K) := {\frac {\ehzcap(K)} {(n! \vol(K))^{1/n}} } $$ of $K$ is defined as the ratio of this capacity of $K$ to the normalized $\omega$-volume of $K$. Note that $sys_n(B) = 1$, for any Euclidean ball $B$ in ${\mathbb R}^{2n}$. 

Recall the following weak version of Viterbo's volume-capacity conjecture~\cite{V1}. 
\begin{conjecture}[Viterbo] %[{\rm Viterbo's volume capacity conjecture}] 
	\label{conj-viterbo}
	If $K \subset {\mathbb R}^{2n}$ is a convex domain, then %its systolic ratio satisfies 
	$$ sys_n(K) \leq 1. $$
\end{conjecture}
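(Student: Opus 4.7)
The plan is to attempt Conjecture~\ref{conj-viterbo} by reducing to a tractable subclass of convex bodies and then applying a sharp combinatorial bound. First, I would use the continuity of both $\ehzcap$ and $\vol$ under Hausdorff convergence of convex bodies to reduce to the case where $K$ is a simplicial convex polytope containing the origin in its interior; both sides of the desired inequality are continuous on this dense subfamily, so the general case follows by approximation and the answer is insensitive to smoothness issues at the boundary.

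Next, I would invoke the known combinatorial formula for $\ehzcap$ of a convex polytope, which expresses it as the maximum over cyclic orderings of the outward facet normals of an explicit quadratic form built from those normals weighted by their facet areas. This converts the systolic inequality into a finite-dimensional algebraic inequality: a maximum of quadratic forms divided by the $n$th root of a volume polynomial in the vertex coordinates. At this point the problem becomes an optimization question on the moduli space of labeled polytopes of a fixed combinatorial type, whose critical points can in principle be analyzed via Lagrange multipliers with respect to both the facet data and the ordering.

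Since $sys_n(B)=1$ for the Euclidean ball, the natural strategy is to show that any local maximizer in each such stratum already satisfies $sys_n\le 1$. The hope is to exploit symmetry: at an extremum, the closed characteristic realizing $\ehzcap$ should visit the facets in a highly structured pattern, which might force $K$ either to be invariant under a large group action (a case for which Viterbo's conjecture is already known) or to split as a $p$-product of lower-dimensional convex bodies. Combined with the behavior of symplectic capacities under $p$-products studied elsewhere in this paper, this would suggest an inductive scheme on dimension, in which the systolic bound in dimensions $2n$ and $2m$ propagates to $p$-products in dimension $2(n+m)$.

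The main obstacle, of course, is this last step: Viterbo's conjecture is a long-standing open problem, with full proofs known only in special cases such as $S^1$-invariant convex bodies, Lagrangian products of planar domains, and certain symmetric polytopes. The combinatorial formula reduces the problem to a concrete extremal question, but simultaneously controlling the maximum over all orderings and over all polytope shapes is precisely where every current approach stalls; in particular, the candidate inductive step through $p$-products requires genuinely new input, since one needs a uniform bound rather than one that deteriorates with the number of factors. An honest implementation of the plan above is therefore likely to yield at best a conditional reduction — which is indeed the viewpoint pursued in the remainder of this note via a ``tensor power trick'' that sends the dimension to infinity.
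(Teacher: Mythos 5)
This statement is a \emph{conjecture}, not a theorem of the paper: the weak Viterbo volume--capacity conjecture is a well-known open problem, and the paper offers no proof of it. What the paper actually proves is the conditional reduction in Corollary~\ref{cor-asymp-Viterbo}, namely that via Theorem~\ref{thm-p-product-systolic} and the tensor power trick it suffices to establish the bound $sys_n(K)\leq\alpha(n)$ with $\alpha(n)\to 1$ as $n\to\infty$. So there is no ``paper's own proof'' to compare against, and your proposal cannot be judged correct as a proof of the statement.

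As for the proposal itself: the reduction to polytopes by Hausdorff continuity is fine, and invoking a combinatorial formula for $\ehzcap$ of polytopes is a legitimate starting point, but everything after that is speculation rather than argument. The third paragraph rests entirely on unproven hopes --- that extremizers of the systolic ratio in each combinatorial stratum are forced to be highly symmetric or to split as $p$-products, and that the known special cases then cover all extremizers. No mechanism is given for why a local maximizer should acquire symmetry or split, and the proposed induction through $p$-products is circular in the relevant direction: Theorem~\ref{thm-p-product-systolic} shows that the systolic bound \emph{propagates to} $p$-products from the factors, but gives no way to deduce the bound for a general body that is not a $p$-product. You acknowledge this yourself in the final paragraph, which is to your credit, but it means the proposal is a research program with a gap exactly at the open problem, not a proof. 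The honest conclusion is that the statement should be left as a conjecture, with the paper's contribution being the reductions (monotonicity in dimension and the asymptotic sufficiency) rather than a resolution.
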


Our first result concerns the systolic ratio of symplectic $p$-products.
We show that if two convex bodies $K \subset {\mathbb R}^{2n}$ and $T \subset {\mathbb R}^{2m}$ fulfill Conjecture~\ref{conj-viterbo}, then the same is true for the $p$-product of $K$ and $T$. More precisely, 
\begin{theorem} \label{thm-p-product-systolic}
	For convex bodies $K \subset {\mathbb R}^{2n}$, $T \subset {\mathbb R}^{2m}$, and $1 \leq p \leq \infty$ 
	$$ sys_{n+m}(K \times_p T)^{m+n} \leq sys_n(K)^n sys_m(T)^m, $$
	 where equality holds if and only if $\ehzcap(K) = \ehzcap(T)$ and $p=2$.
\end{theorem}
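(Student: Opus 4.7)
The plan is to reduce the systolic-ratio inequality to two essentially independent ingredients: an exact formula for $\vol(K \times_p T)$ and an upper bound on $\ehzcap(K \times_p T)$. Unpacking the definition of $sys_n$ and cross-multiplying, the target is equivalent to
\begin{equation*}
\ehzcap(K \times_p T)^{n+m}\,n!\,m!\,\vol(K)\vol(T) \;\leq\; \ehzcap(K)^n\,\ehzcap(T)^m\,(n+m)!\,\vol(K \times_p T).
\end{equation*}

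For the volume I would compute directly. Using the identity $\int_{\R^{2n}} f(\|x\|_K)\,dx = 2n\,\vol(K)\int_0^\infty r^{2n-1}f(r)\,dr$, with $\|\cdot\|_K$ the Minkowski gauge of $K$, integrating the indicator of $\{\|x\|_K^p + \|y\|_T^p \leq 1\}$ and substituting $u = r^p$, $v = s^p$ reduces to a Beta integral, giving
\begin{equation*}
\vol(K \times_p T) \;=\; \vol(K)\vol(T)\,\frac{\Gamma(1+2n/p)\,\Gamma(1+2m/p)}{\Gamma(1+2(n+m)/p)},
\end{equation*}
with the convention $\vol(K \times_\infty T) = \vol(K)\vol(T)$. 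Substituting this reduces the theorem to
\begin{equation*}
\ehzcap(K \times_p T)^{n+m} \;\leq\; \binom{n+m}{n}\,\frac{\Gamma(1+2n/p)\,\Gamma(1+2m/p)}{\Gamma(1+2(n+m)/p)}\,\ehzcap(K)^n\,\ehzcap(T)^m. \tag{$\star$}
\end{equation*}

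For $(\star)$ in the range $p \geq 2$ I would use the inclusion $K \times_p T \subseteq K \times_\infty T = K \times T$ together with the known identity $\ehzcap(K \times T) = \min\{\ehzcap(K),\ehzcap(T)\}$ for convex bodies, yielding $\ehzcap(K \times_p T) \leq \min\{\ehzcap(K),\ehzcap(T)\}$. A short check (using, for instance, concavity of the digamma function) shows that the Gamma-factor on the right of $(\star)$ is nondecreasing in $p$ and equal to $1$ at $p = 2$; combined with the elementary estimate $\min\{\ehzcap(K),\ehzcap(T)\}^{n+m} \leq \ehzcap(K)^n\,\ehzcap(T)^m$, this closes the range $p \geq 2$ and pins the equality case there to $p=2$ with $\ehzcap(K)=\ehzcap(T)$.

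The main obstacle is $(\star)$ in the range $p < 2$, where the Gamma-factor drops below $1$ and the inclusion bound is too weak. The natural tool is a Clarke-type dual variational formulation of $\ehzcap$ for convex bodies, whose action functional involves $\int h_K(-J\dot\gamma)\,dt$. Applying H\"older's inequality with conjugate exponents $p$ and $q$ (where $1/p+1/q=1$) to the action functional for $K \times_p T$, and exploiting the identity $h_{K \times_p T}(x,y)^q = h_K(x)^q + h_T(y)^q$ recorded in the introduction, one should be able to optimize the reparametrization of test loops between the $K$- and $T$-factors so as to reproduce precisely the Beta/Gamma factor appearing in $(\star)$, with saturation of H\"older accounting for the equality case $p=2$, $\ehzcap(K)=\ehzcap(T)$.
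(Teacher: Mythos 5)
Your reduction to $(\star)$ via the exact volume formula is sound (it is the paper's volume lemma for $\vol(K\times_p T)$, and the Gamma factor indeed equals $1$ at $p=2$ and is at least $1$ for $p\geq 2$, e.g.\ by the inclusion $K\times_2 T\subseteq K\times_p T$), and your treatment of the range $p\geq 2$ is essentially the paper's argument. One remark there: the paper proves the \emph{equality} $\ehzcap(K\times_p T)=\min\{\ehzcap(K),\ehzcap(T)\}$ for $p\geq 2$ (upper bound by projecting to the two symplectic subspaces, lower bound by Clarke duality), and you need the equality, not just your upper bound from the inclusion into $K\times T$, to verify the ``if'' direction of the equality case at $p=2$ with $\ehzcap(K)=\ehzcap(T)$.

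The genuine gap is the range $1\leq p<2$. The Clarke-duality/H\"older computation you gesture at does work, but what it yields is the exact value
$$\ehzcap(K\times_p T)=\Bigl(\ehzcap(K)^{\frac{p}{p-2}}+\ehzcap(T)^{\frac{p}{p-2}}\Bigr)^{\frac{p-2}{p}},$$
and this does \emph{not} ``reproduce precisely the Beta/Gamma factor'' in $(\star)$. Bounding it by the weighted AM--GM inequality gives
$$\ehzcap(K\times_p T)\;\leq\;\Bigl(\tfrac{n+m}{m^{m/(n+m)}\,n^{n/(n+m)}}\Bigr)^{\frac{p-2}{p}}\ehzcap(K)^{\frac{n}{n+m}}\ehzcap(T)^{\frac{m}{n+m}},$$
whose constant is a power of $\frac{(n+m)^{n+m}}{n^n m^m}$, not a ratio of Gamma functions. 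Closing $(\star)$ therefore still requires the separate, purely analytic inequality
$$\Bigl(\tfrac{(n+m)^{n+m}}{m^m n^n}\Bigr)^{1-2x}\,\frac{\Gamma(1+2(n+m)x)}{\Gamma(1+2nx)\,\Gamma(1+2mx)}\,\frac{n!\,m!}{(n+m)!}\;\leq\;1,\qquad x=\tfrac1p\in(\tfrac12,1],$$
which is the real content of the theorem in this range. The paper proves it by showing the left-hand side is log-convex in $x$, reducing to the endpoints $x=1/2$ (where it equals $1$) and $x=1$ (where it is $<1$, verified either by Stirling-type bounds or by recognizing it as the systolic ratio of a free sum of two balls, which is less than $1$ by a Gromov-width argument). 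Your proposal contains no substitute for this step, and ``saturation of H\"older'' cannot supply it: the inequality is strict for every $p<2$ precisely because of this Gamma-function estimate, not because of slack in H\"older.
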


\begin{remark}
    It is not hard to deduce that $sys_{n+m}(K \times_p T) < sys_{n+m}(K \times_2 T)$ for $p > 2$ from the proof of Theorem \ref{thm-p-product-systolic} below.
    On the other hand, for $p = 1$, consider a convex body $K \subset \R^{2n}$ and choose a convex body $T \subset \R^2$ with $c(T) = n c(K).$ Then, using Proposition \ref{prop-p-product-EHZ-capacity} below, one gets that 
    $$ \frac{sys_{n+1}(K \times_1 T)}{sys_{n+1}(K \times_2 T)} = \left( \frac{n}{n+1} \right) (2n+1)^\frac{1}{n+1} > 1, \ \ \ {\rm for} \ n \geq 2.$$
\end{remark}

One application of Theorem~\ref{thm-p-product-systolic} is a ``tensor power trick" (see, e.g., the discussion in Section 1.9.4 of~\cite{Ta}), which shows that it is enough to prove Conjecture~\ref{conj-viterbo} above for convex domains in the asymptotic regime $n \to \infty$.

\begin{corollary} \label{cor-asymp-Viterbo} 
	If Conjecture~\ref{conj-viterbo} holds in dimension $2n$ for some $n>1$, then it also holds in dimension $2m$ for every $m\leq n$. Moreover, if there exists a sequence $\alpha(n) \xrightarrow{n \to \infty } 1$ such that for every convex body $K \subset \R^{2n}$ one has 
	$$ sys_n(K) \leq \alpha(n) ,$$ 
	%$$ \frac{\ehzcap(K)}{n!^{1/n} \vol(K)^{1/n}} \leq \alpha(n) ,$$ 
	then Conjecture~\ref{conj-viterbo} holds in every dimension $n$. 
\end{corollary}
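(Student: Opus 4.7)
The plan is to deduce both parts from the equality case of Theorem \ref{thm-p-product-systolic}. Setting $p = 2$ and assuming $\ehzcap(K) = \ehzcap(T)$, the theorem yields
\begin{equation*}
sys_{n+m}(K \times_2 T)^{n+m} = sys_n(K)^n \cdot sys_m(T)^m,
\end{equation*}
so tensoring $K$ with a well-chosen body of matched capacity produces a higher-dimensional convex body whose systolic ratio is a predictable power of $sys(K)$. I will instantiate this idea in two ways: against a Euclidean ball for the first assertion, and iteratively against copies of $K$ itself for the second.

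For the first assertion, I would take $K \subset \R^{2m}$ with $m < n$ and pair it with a Euclidean ball $B \subset \R^{2(n-m)}$ rescaled so that $\ehzcap(B) = \ehzcap(K)$. Since $sys_{n-m}(B) = 1$, the identity above reduces to $sys_n(K \times_2 B) = sys_m(K)^{m/n}$. Applying the dimension-$2n$ conjecture to the convex body $K \times_2 B$ then immediately yields $sys_m(K)^{m/n} \leq 1$ and hence $sys_m(K) \leq 1$.

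For the second assertion, I would fix $K \subset \R^{2n_0}$ and consider the $k$-fold iterate $K^{(k)} := K \times_2 \cdots \times_2 K \subset \R^{2 n_0 k}$. The key step is to prove by induction on $k$ that $\ehzcap(K^{(k)}) = \ehzcap(K)$ and consequently $sys_{n_0 k}(K^{(k)}) = sys_{n_0}(K)$. The $sys$-identity is immediate from the equality case once the capacities of the two factors match at each stage; the capacity equality in turn follows by combining the equality case with the standard $2$-product volume formula $\vol(K \times_2 T) = \tfrac{n!\,m!}{(n+m)!}\vol(K)\vol(T)$, which rearranges into $\ehzcap(K \times_2 T)^{n+m} = \ehzcap(K)^n \ehzcap(T)^m$ and hence gives $\ehzcap$-invariance under iteration of equal-capacity factors. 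Granted the claim, the hypothesis yields $sys_{n_0}(K) = sys_{n_0 k}(K^{(k)}) \leq \alpha(n_0 k)$ for every $k$, and sending $k \to \infty$ concludes $sys_{n_0}(K) \leq 1$.

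The main thing to check will be the inductive step that iterated $2$-products of bodies of matching capacity preserve $\ehzcap$. This is the only non-trivial point, but it is really bookkeeping: the equality case and the $2$-product volume formula, both of which are natural inputs in establishing Theorem \ref{thm-p-product-systolic}, combine to deliver exactly the required invariance, so no genuinely new analytic content is needed beyond what is already in place.
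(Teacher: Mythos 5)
Your proposal is correct and follows essentially the same route as the paper: for the first assertion it pairs $K$ with a Euclidean ball of matching capacity and invokes the equality case of Theorem~\ref{thm-p-product-systolic} with $p=2$, and for the second it iterates the $2$-product of $K$ with itself and lets the dimension go to infinity. The only cosmetic difference is that you derive the invariance $\ehzcap(K\times_2 K)=\ehzcap(K)$ from the equality case together with Lemma~\ref{lem_vol}, whereas the paper reads it off directly from Proposition~\ref{prop-p-product-EHZ-capacity}; both are valid.
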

We remark that Corollary~\ref{cor-asymp-Viterbo} was already known to experts~\cite{Hut}.
Another application of Theorem~\ref{thm-p-product-systolic} concerns obstructions on 
the possibility of a ``$p$-product decomposition" of the Euclidean ball. This will be discussed in Section~\ref{sec-symp-decompositios} below (see Corollary~\ref{Cor-p-decomposition-of-the-ball-systolic}).

%We remark that 
It is well known (see e.g.,~\cite{CHLS}) that $\ehzcap$ satisfies the Cartesian product property i.e., for two convex bodies  $K \subset {\mathbb R}^{2n}$ and $T \subset {\mathbb R}^{2m}$,
$$ \ehzcap (K \times T) = \min \{ \ehzcap (K ),\ehzcap ( T) \}.$$
An important ingredient in the proof of Theorem~\ref{thm-p-product-systolic} above, which might be also of independent interest, is the following generalization of this formula for the symplectic $p$-product of two convex bodies (cf. Lemma~\ref{lem_vol} below).

\begin{proposition}
    \label{prop-p-product-EHZ-capacity}
    For two convex bodies $K \subset {\mathbb R}^{2n}$, $T \subset {\mathbb R}^{2m}$, and $1 \leq p \leq \infty$,
	$$ \ehzcap(K \times_p T) = 
	\begin{cases}
	\min \{\ehzcap(K),\ehzcap(T)\}, & \text{if } \ 2 \leq p \leq \infty,\\
	\left(\ehzcap(K)^\frac{p}{p-2} + \ehzcap(T)^\frac{p}{p-2}\right)^{{\frac {p-2} p }}, & \text{if } \ 1 \leq p < 2.
	\end{cases}$$
\end{proposition}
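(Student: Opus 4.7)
The plan is to combine two facts: that $\ehzcap$ of a convex body equals the minimum action over closed characteristics on its boundary, and that $K \times_p T = \{H \leq 1\}$ for the split $p$-homogeneous Hamiltonian $H(x,y) = \|x\|_K^p + \|y\|_T^p$. After reducing to the case in which $K$ and $T$ are smooth and strictly convex (by continuity of $\ehzcap$ in the Hausdorff metric on convex bodies), the Hamiltonian vector field $X_H$ on $\R^{2(n+m)}$ becomes the product of $X_{H_K}$ on $\R^{2n}$ and $X_{H_T}$ on $\R^{2m}$, so $\|x\|_K$ and $\|y\|_T$ are separately conserved along every orbit.

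I would then classify the closed characteristics on $\partial(K \times_p T)$ into three types: (a) \emph{pure} orbits $(\gamma, 0)$, with $\gamma$ a closed characteristic on $\partial K$, whose action is at least $\ehzcap(K)$; (b) pure orbits $(0, \delta)$, of action at least $\ehzcap(T)$; and (c) \emph{mixed} orbits on which $\|x\|_K \equiv r_K$ and $\|y\|_T \equiv r_T$ for some $r_K, r_T > 0$ with $r_K^p + r_T^p = 1$. The core computation uses the identity $A = \tfrac{p}{2} E T$ between action, energy, and period on a level set of a $p$-homogeneous Hamiltonian, together with the scaling $\ehzcap(\lambda L) = \lambda^2 \ehzcap(L)$, to show that the $X_{H_K}$-orbit on $\partial(r_K K)$ has period $\tfrac{2}{p} r_K^{2-p}\ehzcap(K)$, and analogously for $y$. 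The period-matching constraint $(r_K/r_T)^{2-p} = \ehzcap(T)/\ehzcap(K)$, together with $r_K^p + r_T^p = 1$, then yields after a short algebraic manipulation that the action of a simply-traversed mixed orbit equals
\begin{equation*}
\bigl(\ehzcap(K)^{p/(p-2)} + \ehzcap(T)^{p/(p-2)}\bigr)^{(p-2)/p}
\end{equation*}
when $p \neq 2$; replacing the pair $(\ehzcap(K),\ehzcap(T))$ by $(k\,\ehzcap(K),l\,\ehzcap(T))$ shows that multi-cover mixed orbits give strictly larger action.

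The statement follows by comparing these values. For $p > 2$ the exponent $(p-2)/p$ is positive, so the mixed value strictly exceeds $\max\{\ehzcap(K),\ehzcap(T)\}$ and the minimum is realized by a pure orbit; for $1 \leq p < 2$ the exponent is negative, so the mixed value lies strictly below both $\ehzcap(K)$ and $\ehzcap(T)$ and realizes the minimum; for $p = 2$ the period-matching is incompatible with $\ehzcap(K) \neq \ehzcap(T)$, while in all cases the pure orbits realize $\min\{\ehzcap(K),\ehzcap(T)\}$. The $p = \infty$ case reduces to the classical Cartesian-product formula. The main obstacle I anticipate is the $p = 1$ case, where $K \times_1 T$ has lower-dimensional faces $K \times \{0\}$ and $\{0\}\times T$ on which $H = \|x\|_K + \|y\|_T$ fails to be smooth, and one must verify that no additional generalized closed characteristics appear along these strata. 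I would handle this either by working within the framework of generalized closed characteristics for convex bodies used in the footnote to the paper, or by taking the limit $p \to 1^+$ and invoking continuity of $\ehzcap$.
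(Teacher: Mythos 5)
Your argument is essentially the paper's second proof of this proposition (the paper also gives a first proof via Clarke's dual action principle): it likewise classifies closed characteristics on $\partial(K\times_p T)$ into pure and mixed ones using the identity for the normal to the $p$-product, derives the same period-matching constraint, and compares $\bigl(t_1^{p/(p-2)}+t_2^{p/(p-2)}\bigr)^{(p-2)/p}$ against $t_1,t_2$ according to the sign of $(p-2)/p$. The one point to tighten is that periods of closed characteristics on $\partial K$ need not be integer multiples of $\ehzcap(K)$, so instead of a multi-cover argument you should minimize $\bigl(t_1^{p/(p-2)}+t_2^{p/(p-2)}\bigr)^{(p-2)/p}$ over \emph{all} pairs of periods $t_1\geq\ehzcap(K)$, $t_2\geq\ehzcap(T)$, which follows from its monotonicity in each variable---exactly as the paper does.
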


\subsection{Higher-order capacities of symplectic $p$-products}

Consider the family of  Ekeland-Hofer capacities $\{c_{\rm EH}^k\}_{k=1}^{\infty}$ defined in~\cite{EH1,EH2}. We recall that in Eq.\,(3.8) from~\cite{CHLS} it is  asserted  that if $X_1 \subset {\mathbb R}^{2n}$ and $X_2 \subset {\mathbb R}^{2m}$ are 
two compact star-shaped domains, then 
\begin{equation} \label{formula-product-HE-first-capacity} c_{\rm EH}^k(X_1 \times X_2) = \min_{i+j=k} \{ c_{\rm EH}^i(X_1) + c_{\rm EH}^j(X_2)\}, \end{equation}
where $i$ and $j$ are non-negative integers, and by definition $c_{\rm EH}^0=0$.
Motivated by Proposition~\ref{prop-p-product-EHZ-capacity}, we conjecture the following generalization of~$(\ref{formula-product-HE-first-capacity})$.

\begin{conjecture}
	\label{Conj-EH-cap-of-p-products}
 For  star-shaped domains $X_1 \subset {\mathbb R}^{2n}$, $X_2 \subset {\mathbb R}^{2m}$, and $1 \leq p$,
	$$ c^k_{\rm EH}(X_1 \times_p X_2) = 
	\begin{cases}
	\min\limits_{i+j = k} \left[ c^i_{\rm EH}(X_1)^{\frac{p}{p-2}} + c^j_{\rm EH}(X_2)^{\frac{p}{p-2}} \right]^\frac{p-2}{p},& \text{if }p \geq 2,    \\
	\max\limits_{\substack{i+j= k+1 \\
                   i,j \neq 0}}  \left[ c^i_{\rm EH}(X_1)^{\frac{p}{p-2}} + c^j_{\rm EH}(X_2)^{\frac{p}{p-2}} \right]^\frac{p-2}{p},& \text{if }1 \leq p \leq 2.
	\end{cases}$$
\end{conjecture}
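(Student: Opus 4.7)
My plan is to extend the argument behind Proposition~\ref{prop-p-product-EHZ-capacity} to higher indices by combining a direct analysis of closed orbits on $\partial(X_1 \times_p X_2)$ with the Fadell--Rabinowitz minimax definition of the Ekeland--Hofer capacities. I would first treat smooth strictly star-shaped $X_i$ and then pass to the general case by approximation and continuity of $c^k_{\rm EH}$.

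I would start by analyzing the orbit structure. The $p$-homogeneous function $H_p(x,y) := \|x\|_{X_1}^p + \|y\|_{X_2}^p$ is a defining function of $X_1 \times_p X_2$ that splits as a sum in the two factors, so its Hamiltonian flow decouples and closed orbits on $\partial(X_1 \times_p X_2) = \{H_p = 1\}$ are exactly pairs $(\tilde\gamma_1, \tilde\gamma_2)$ of closed orbits in the two factors sharing a common period. The standard rescaling $\tilde\gamma_i(t) = \lambda_i \gamma_i(\lambda_i^{p-2} t)$ sends an orbit $\gamma_i$ on $\partial X_i$ of action $a_i$ and period $T_i = 2a_i/p$ to an orbit on $\{\|\cdot\|_{X_i}^p = \lambda_i^p\}$ of period $T_i/\lambda_i^{p-2}$ and action $\lambda_i^2 a_i$. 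Imposing $\lambda_1^p + \lambda_2^p = 1$ together with period matching pins down $\lambda_i$ in terms of $(a_1,a_2)$, and a direct computation (extending the $k=1$ calculation inside Proposition~\ref{prop-p-product-EHZ-capacity}) shows that the combined action equals
\[
\mathcal{A}(\tilde\gamma_1, \tilde\gamma_2) = \bigl( a_1^{p/(p-2)} + a_2^{p/(p-2)} \bigr)^{(p-2)/p}.
\]
Thus the (generalized) action spectrum of $\partial(X_1 \times_p X_2)$ is exactly the set of such combinations as $(a_1, a_2)$ ranges over pairs of values (including iterates) in the action spectra of the factors.

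To extract the $k$-th value from the Ekeland--Hofer minimax, I would build a test cycle on $X_1 \times_p X_2$ by joining an index-$i$ class from the minimax problem for $c^i_{\rm EH}(X_1)$ with an index-$j$ class for $c^j_{\rm EH}(X_2)$, tying them together by the one-parameter family of admissible rescalings $\lambda_i$. The Fadell--Rabinowitz index of the resulting cycle should be $i+j$ (producing $\leq$ for the $p \geq 2$ branch) or $i+j-1$ (producing $\geq$ for the $1 \leq p \leq 2$ branch), depending on whether the scaling parameter is a free or a constrained direction; the matching opposite inequalities would follow by classifying all critical sets of the action functional and verifying the index assignment. The min/max dichotomy has a transparent source: the combined action above is an $\ell^x$-mean with exponent $x = p/(p-2) > 0$ for $p > 2$, hence strictly increasing in each $a_i$ (so a min over $i+j=k$ is natural), while for $p < 2$ one has $x < 0$ and the expression is harmonic-like and dominated by the smaller input (so a max is natural); the constraint $i,j \geq 1$ in the $p \leq 2$ branch reflects that $\partial(X_1 \times_p X_2)$ contains no one-sided orbits when $p < \infty$, in contrast to the Cartesian case $p=\infty$ where $\partial X_1 \times X_2 \subset \partial(X_1 \times X_2)$.

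The main obstacle I foresee is precisely the rigorous Fadell--Rabinowitz index computation: one needs a K\"unneth-type splitting of the $S^1$-equivariant cohomology of the action functional compatible with the nonlinear decoupling of the first step, together with careful accounting of the index shift coming from the scaling family (which is exactly what distinguishes the two branches of the conjectured formula). Degenerate situations, for instance when the action ratio $a_1/a_2$ does not allow period matching via admissible rescalings or when critical sets become non-isolated, will need a genericity or perturbation argument; once those are handled in the smooth strictly star-shaped case, the passage to general star-shaped domains should follow from standard continuity properties of $c^k_{\rm EH}$.
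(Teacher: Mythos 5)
The statement you are addressing is stated in the paper as Conjecture~\ref{Conj-EH-cap-of-p-products}: the authors do not prove it, so there is no proof in the paper to compare yours against. What they do establish are special cases and evidence: the $k=1$ case is Proposition~\ref{prop-p-product-EHZ-capacity} (proved twice, once via Clarke's dual action principle and once via the dynamics of characteristics on $\partial(K\times_p T)$), and Theorem~\ref{GH_p_formula} verifies the analogous formula for the Gutt--Hutchings capacities of convex toric domains when $p\geq 2$ and concave toric domains when $1\leq p\leq 2$, using the combinatorial formulas of Theorem~\ref{GH-thm-toric-domains}. Your proposal is therefore an attempt at an open problem, and as written it does not close it. The first part of your sketch --- the decoupling of the Hamiltonian flow of $\|x\|_{X_1}^p+\|y\|_{X_2}^p$, the rescaling/period-matching computation, and the resulting description of the action spectrum of $\partial(X_1\times_p X_2)$ as the set of values $\bigl(a_1^{p/(p-2)}+a_2^{p/(p-2)}\bigr)^{(p-2)/p}$ --- is consistent with the paper's own dynamical analysis for $k=1$. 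But the step that would actually prove the conjecture, namely the $S^1$-equivariant minimax/Fadell--Rabinowitz index computation that selects the $k$-th capacity from this spectrum, is precisely what you defer (``the matching opposite inequalities would follow by classifying all critical sets \dots''). Knowing the action spectrum of a hypersurface is very far from knowing which spectral value each $c^k_{\rm EH}$ picks out; that selection is the entire content of the conjecture, and no argument for the claimed index additivity or for the index shift distinguishing the two branches is supplied.

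One concrete inaccuracy in your heuristic: the claim that $\partial(X_1\times_p X_2)$ contains no one-sided orbits when $p<\infty$ is false. For $p>1$ the points $(x,0)$ with $x\in\partial X_1$ lie on $\partial(X_1\times_p X_2)$, and equation \eqref{eq_p_product_normal} with $\beta=0$ shows the characteristic direction there is $(Jn_{X_1}(x),0)$, so $\partial X_1\times\{0\}$ is foliated by (closed) one-sided characteristics; the paper's second proof of Proposition~\ref{prop-p-product-EHZ-capacity} explicitly keeps track of these $\alpha=0$ or $\beta=0$ orbits for all $p$. In the regime $1\leq p<2$ they fail to realize $\ehzcap$ only because the two-sided combination has strictly smaller action, not because they are absent, so your proposed explanation for the constraint $i,j\neq 0$ in the $1\leq p\leq 2$ branch does not stand and that constraint would also need to come out of the (missing) index analysis.
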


\begin{remark} \label{rmk-about-Conj}
    {\rm (I) In the case $p=2$, the two expressions in Conjecture~\ref{Conj-EH-cap-of-p-products} agree, and a combinatorial argument shows that they are equal to the $k^{\text{th}}$ term in the sequence $\{c_{\rm EH}^i(X_1),c_{\rm EH}^j(X_2)\}$ arranged in non-decreasing order with repetitions. For completeness, we provide a proof of this fact in the Appendix of the paper (see Lemma~\ref{formulas_coincide}). 
    
            (II) It is conjectured that the family $\{c^k_{\rm GH} \}_{k=1}^{\infty}$ of  Gutt-Hutchings symplectic capacities  introduced in~\cite{Gu-Ha} coincide with the family of Ekeland-Hofer capacities $\{c_{\rm EH}^k\}_{k=1}^{\infty}$ on the class of compact star-shaped domains of ${\mathbb R}^{2n}$ (see Conjecture 1.9 in~\cite{Gu-Ha}). Thus, Conjecture~\ref{Conj-EH-cap-of-p-products} above could be stated for $\{c^k_{\rm GH} \}_{k=1}^{\infty}$ as well.
    
    }
    \end{remark}

Using the remarkable fact that for convex/concave toric domains there are ``combinatorial" formulas to compute the Gutt-Hutchings capacities $\{c^k_{\rm GH} \}_{k=1}^{\infty}$ (see Theorems 1.6 and 1.14 in~\cite{Gu-Ha}), we can show that Conjecture~\ref{Conj-EH-cap-of-p-products}, when stated for the family $\{c^k_{\rm GH} \}_{k=1}^{\infty}$, holds in some special cases. More precisely,

\begin{theorem}
    	\label{GH_p_formula}
    	For convex toric domains $K \subset {\mathbb R}^{2n}$, $T \subset {\mathbb R}^{2m}$, and $p \geq 2$\footnote[2]{See Remark~\ref{rmk-about-Conj} (I) above about the case $p=2$.}
    		$$ c^k_{\rm GH}( K \times_p T) = \min_{i+j = k} \left[ c^i_{\rm GH}(K)^{\frac{p}{p-2}} + c^j_{\rm GH}(T)^{\frac{p}{p-2}} \right]^\frac{p-2}{p}.$$
    			For concave toric domains $K \subset {\mathbb R}^{2n}$, $T \subset {\mathbb R}^{2m}$, and $1 \leq p \leq 2$ 
    			$$ c^k_{\rm GH}( K \times_p T) = 	\max\limits_{\substack{i+j= k+1 \\
                   i,j \neq 0}}  \left[ c^i_{\rm GH}(K)^{\frac{p}{p-2}} + c^j_{\rm GH}(T)^{\frac{p}{p-2}} \right]^\frac{p-2}{p}.$$ 
 \end{theorem}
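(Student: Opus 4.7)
The plan is to identify $K \times_p T$ as itself a toric domain and then apply the combinatorial Gutt--Hutchings formulas (Theorems 1.6 and 1.14 in \cite{Gu-Ha}) directly. Writing $K = \mu^{-1}(\Omega_K)$ and $T = \mu^{-1}(\Omega_T)$ for the moment map $\mu(z_1,\ldots,z_n) = (\pi|z_1|^2, \ldots, \pi|z_n|^2)$, the definition of the $p$-product immediately gives that $K \times_p T$ is $T^{n+m}$-invariant with moment image
$$ \Omega \, := \, \bigcup_{t \in [0,1]} (1-t)^{2/p}\,\Omega_K \times t^{2/p}\,\Omega_T \, \subset \, \R^{n+m}_{\geq 0}. $$
First I would check that $\Omega$ inherits the appropriate toric character: for $p \geq 2$ with $\Omega_K, \Omega_T$ convex the outer boundary of $\Omega$ is cut out as the sublevel set of a convex function, while for $1 \leq p \leq 2$ with $\Omega_K, \Omega_T$ concave toric the complement of $\Omega$ in $\R^{n+m}_{\geq 0}$ is convex.

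Next I would compute the relevant extremal function on $\Omega$ in closed form. In the convex case, the support function at $(v, u) \in \R^n_{\geq 0} \times \R^m_{\geq 0}$ satisfies
$$ h_\Omega(v, u) \, = \, \sup_{t \in [0,1]} \bigl[\,(1-t)^{2/p}\,h_{\Omega_K}(v) \, +\, t^{2/p}\,h_{\Omega_T}(u)\,\bigr], $$
and a short one-variable calculus optimization in $t$ yields, for $p > 2$,
$$ h_\Omega(v, u) \, = \, \bigl(\,h_{\Omega_K}(v)^{p/(p-2)} \, + \, h_{\Omega_T}(u)^{p/(p-2)}\,\bigr)^{(p-2)/p}. $$
The concave case is analogous after parametrizing the upper boundary $\partial^+ \Omega$ by $((1-t)^{2/p}w_K, t^{2/p}w_T)$ for $w_K \in \partial^+\Omega_K$, $w_T \in \partial^+\Omega_T$, $t \in (0,1)$: replacing $\sup$ by $\inf$ and $h_\Omega$ by the inner support $\psi_\Omega(v) := \min_{w \in \partial^+\Omega}\langle v, w \rangle$, the same algebraic identity holds for $1 \leq p < 2$, now with negative exponent $p/(p-2) < 0$.

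Finally I would apply the Gutt--Hutchings combinatorial formulas and separate the optimization. In the convex case,
$$ c^k_{\rm GH}(K \times_p T) \, = \, \min \bigl\{ h_\Omega(v, u) \, : \, (v, u) \in \mathbb{Z}^{n+m}_{\geq 0}, \ |v|_1 + |u|_1 = k \bigr\}. $$
Splitting according to $|v|_1 = i$ and $|u|_1 = j$ with $i + j = k$, the strict monotonicity of $(a, b) \mapsto (a^{p/(p-2)} + b^{p/(p-2)})^{(p-2)/p}$ in each argument allows $\min_v$ and $\min_u$ to be carried out independently, producing $c^i_{\rm GH}(K)$ and $c^j_{\rm GH}(T)$ by the Gutt--Hutchings formula applied in $\R^{2n}$ and $\R^{2m}$ respectively. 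The concave case is entirely parallel, using the formula $c^k_{\rm GH}(X_\Omega) = \max\{\psi_\Omega(v) : v \in \mathbb{Z}^d_{\geq 1}, |v|_1 = k + d - 1\}$ in dimension $d$; the substitution $i = i' - n + 1$, $j = j' - m + 1$ converts the constraint $i' + j' = k + n + m - 1$ with $i' \geq n, j' \geq m$ into $i + j = k + 1$ with $i, j \geq 1$.

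The main obstacle I anticipate is the verification in the concave case that $\Omega$ is actually concave toric and that $\partial^+\Omega$ admits the claimed parametrization, since the union $\bigcup_t (1-t)^{2/p}\Omega_K \times t^{2/p}\Omega_T$ is not presented naturally as a sub- or super-level set of a single function; the toy example $\Omega_K = \Omega_T = [0, 1]$ with $p = 1$, which yields the concave region $\{\sqrt x + \sqrt y \leq 1\}$, is a useful sanity check. A secondary subtlety is the degeneration of the closed-form expression as $p \to 2$, where the exponents blow up; this case must be handled as a limit, agreeing with the combinatorial Lemma~\ref{formulas_coincide} from the appendix.
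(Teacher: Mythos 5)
Your proposal follows essentially the same route as the paper: the paper's Lemma~\ref{lem-1-about-toric-p-product} identifies $K\times_p T$ with the toric domain over $\Omega_K\times_{p/2}\Omega_T$, Lemma~\ref{lem-2-about-toric-p-product} verifies convexity/concavity of that moment image via the (reverse) Minkowski inequality, and the proof then splits the Gutt--Hutchings minimization/maximization exactly as you describe, including the same index shift in the concave case and the same continuity argument at $p=2$. The only cosmetic difference is that you derive the closed form for $h_\Omega$ by optimizing over $t$, whereas the paper invokes the general dual-exponent identity for support functions of $p$-products.
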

Theorem \ref{GH_p_formula} generalizes certain calculations of $c^k_{\rm GH}$ in some known cases. In particular, it is known that the Cartesian product property holds for convex toric domains (see Remark 1.10 in \cite{Gu-Ha}), and recently the Gutt-Hutchings capacities of $p$-product of discs were calculated in \cite{Ke-Li} (see also~\cite{OR}).

We end this subsection with a result about the limit of the normalized Ekeland-Hofer capacities. 
Following \cite{CHLS}, we set % the normalized Ekeland-Hofer capacities as
\[ \bar{c_k} := \frac{c^k_{\rm EH}}{k}, \]
and denote the limit of $\bar{c_k}$ as $k\to \infty$ by $c_\infty$. Denote also $c^{\rm GH}_\infty := \lim_{k \to \infty} \frac{c^k_{\rm GH}}{k}$ to be the same limit for the Gutt-Hutchings capacities.
In \cite[Problem 17]{CHLS} the authors raise the question whether $c_\infty$ coincides with the Lagrangian capacity $c_L$ defined in \cite{Ci-Mo}. In \cite{Gu-Ha} it is shown that $c^{\rm GH}_\infty(K) \leq c_L(K)$ for a convex or concave toric domain $K$, and moreover, in this case $c^{\rm GH}_\infty(K) = c_\square(K)$, where the cube capacity $c_\square(K)$ is the largest area of the 2-faces of a cube that can be symplectically embedded into $K$.
In \cite{CHLS}, it was proved that
\[ c_\infty(E(a_1,\ldots,a_n)) = \frac{1}{ 1/a_1 + \cdots + 1/a_n}, \]
for the ellipsoids 
$ E(a_1,\ldots, a_n) := \{z \in \C^n : \frac{\pi |z_1|^2}{a_1} + \cdots + \frac{\pi |z_n|^2}{a_n} < 1 \},$ and that
\[ c_\infty(P(a_1,\ldots, a_n)) = \min \{a_1, \ldots, a_n\} , \]
for the polydiscs $ P(a_1,\ldots, a_n) := B^2[a_1] \times \cdots \times B^2[a_n],$
where $B^n[r] \subset \R^{n}$ is the Euclidean ball of radius $\sqrt{r/\pi}$.
A generalization of the above formulas, assuming Conjecture \ref{Conj-EH-cap-of-p-products} holds, is the following.
\begin{theorem}\label{thm_inf_cap}
	Assume Conjecture \ref{Conj-EH-cap-of-p-products} holds. Then, for $1 \leq p \leq \infty$, and convex domains $K_1,\ldots,K_m \subset \R^{2n}$ such that $c_\infty(K_1),\ldots,c_\infty(K_m)$ exist, one has
	\begin{equation} \label{eq-c-inf}
	     c_\infty(K_1 \times_p \cdots \times_p K_m) = \left(c_\infty(K_1)^\frac{-p}{2} + \cdots +  c_\infty(K_m)^\frac{-p}{2}\right)^\frac{-2}{p}.
	\end{equation}
\end{theorem}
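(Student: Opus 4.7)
The plan is to reduce to the two-factor case $m=2$ by induction and then take $k \to \infty$ in the formula from Conjecture~\ref{Conj-EH-cap-of-p-products}. Associativity of the $p$-product (at the level of its gauge norm, $\|(x_1,\dots,x_m)\|^p = \sum_i \|x_i\|_{K_i}^p$) lets us write $K_1 \times_p \cdots \times_p K_m = L \times_p K_m$ with $L := K_1 \times_p \cdots \times_p K_{m-1}$, and the target formula (\ref{eq-c-inf}) is compatible with this bracketing. Hence the full theorem follows from the $m=2$ case, together with the fact that $c_\infty$ of $L$ itself exists and obeys the formula --- which is what the $m=2$ statement delivers.

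For the two-factor case, set $a := c_\infty(K_1)$, $b := c_\infty(K_2)$, and $q := p/(p-2)$ (initially assume $p \in [1,\infty) \setminus \{2\}$). Conjecture~\ref{Conj-EH-cap-of-p-products} reads
\[ \frac{c^k_{\rm EH}(K_1 \times_p K_2)}{k} = \mathrm{opt} \, \Bigl[ \bigl(\tfrac{c^i_{\rm EH}(K_1)}{k}\bigr)^q + \bigl(\tfrac{c^j_{\rm EH}(K_2)}{k}\bigr)^q \Bigr]^{1/q}, \]
where $\mathrm{opt}$ is $\min$ over $i+j=k,\, i,j \geq 0$ if $p > 2$, and $\max$ over $i+j = k+1,\, i,j \geq 1$ if $1 \leq p < 2$. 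The key observation is that for any sequence $(i_k, j_k)$ with $i_k/k \to t \in [0,1]$, using the hypothesis $c^i_{\rm EH}(K_j)/i \to c_\infty(K_j)$ (and noting that whether $i_k$ is bounded or goes to infinity, $c^{i_k}_{\rm EH}(K_1)/k \to ta$), one has
\[ \Bigl[ \bigl(\tfrac{c^{i_k}_{\rm EH}(K_1)}{k}\bigr)^q + \bigl(\tfrac{c^{j_k}_{\rm EH}(K_2)}{k}\bigr)^q \Bigr]^{1/q} \xrightarrow{k \to \infty} \bigl[(ta)^q + ((1-t)b)^q\bigr]^{1/q} =: F(t). \]
The exchange of $\lim_k$ with $\mathrm{opt}$ then proceeds by the standard two-sided bound: for one direction plug a continuous optimizer $t^\ast$ into the discrete problem via $i_k := \lceil t^\ast k \rceil$, and for the other extract a convergent subsequence from the discrete optimizers and invoke the limit identification above.

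The remaining task is to compute $\mathrm{opt}_{t \in [0,1]} F(t)$. Differentiating $(ta)^q + ((1-t)b)^q$ yields a unique interior critical point at $t^\ast = b^{p/2}/(a^{p/2} + b^{p/2})$, using the identity $q/(q-1) = p/2$ valid for either sign of $q$. Plugging in and simplifying with $pq/2 - q = p/2$ gives
\[ F(t^\ast) = \frac{ab}{(a^{p/2}+b^{p/2})^{2/p}} = (a^{-p/2} + b^{-p/2})^{-2/p}. \]
For $p > 2$ we have $q > 0$ and boundary values $F(0) = b$, $F(1) = a$, both $\geq F(t^\ast)$, so $t^\ast$ realizes the minimum. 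For $1 \leq p < 2$ we have $q < 0$, so $s^q \to \infty$ as $s \to 0^+$ and consequently $F(0) = F(1) = 0$; since $F > 0$ in the interior and $t^\ast$ is the only critical point, it realizes the maximum. Either way the $\mathrm{opt}$ equals $(a^{-p/2}+b^{-p/2})^{-2/p}$, matching (\ref{eq-c-inf}) for $m=2$. The excluded cases are handled as limits: for $p=2$, Lemma~\ref{formulas_coincide} identifies $c^k_{\rm EH}(K_1 \times_2 K_2)$ with the $k$-th term of the non-decreasingly merged sequence $\{c^i_{\rm EH}(K_1)\} \cup \{c^j_{\rm EH}(K_2)\}$, and a Weyl-type count gives $c^k_{\rm EH}/k \to ab/(a+b)$; for $p=\infty$, equation~(\ref{formula-product-HE-first-capacity}) directly yields $c^k_{\rm EH}/k \to \min(a,b)$, which agrees with $\lim_{p \to \infty}(a^{-p/2}+b^{-p/2})^{-2/p}$.

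The main obstacle I expect is not the critical-point algebra, but the careful justification of the limit exchange with $\mathrm{opt}$ at boundary indices (small $i$ or $j$); the sign of $q$ dictates whether boundary contributions vanish or blow up, and in the $\max$ case for $1 \leq p < 2$ one must verify that discrete maximizers stay in the bulk of $\{1, \dots, k\}$ asymptotically so that $F_k$ actually converges to the interior maximum of $F$ rather than to a spurious boundary value.
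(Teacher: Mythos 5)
Your proposal is correct and follows essentially the same route as the paper: reduce to two factors, obtain the upper bound by plugging the near-optimal split $i_k \approx t^\ast k$ with $t^\ast = b^{p/2}/(a^{p/2}+b^{p/2})$ (the paper's $w$) into the conjectural formula, and obtain the lower bound by analyzing the actual discrete optimizers (your subsequence-compactness phrasing cleanly subsumes the paper's explicit two-case analysis of bounded versus unbounded indices). The only real divergence is at $p=2$, where the paper sandwiches $K\times_2 T$ between $K\times_{2\pm\varepsilon}T$ and uses continuity, while you use Lemma~\ref{formulas_coincide} plus a counting argument on the merged sequence --- both work.
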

\begin{remark}
	{\rm (I) As ellipsoids and polydiscs are $p$-product of discs with $p=2$ and $p=\infty$, respectively, Theorem \ref{thm_inf_cap} does indeed generalize the above mentioned formulas for $c_\infty(E(a_1,\ldots, a_n))$ and $c_\infty(P(a_1,\ldots, a_n))$.
	
	(II) Theorem \ref{GH_p_formula} gives us that \eqref{eq-c-inf} holds for $c^{\rm GH}_\infty$, and for convex/concave toric domains with $p \geq 2$ and $1 \leq p \leq 2$, respectively. Moreover, this can be easily recovered by using the fact, shown in \cite{Gu-Ha}, that for convex/concave toric domains $$c^{\rm GH}_\infty(X_\Omega) = c_\square(X_\Omega) = \frac{1}{\|(1,\ldots,1)\|_\Omega}$$ (see Section \ref{sec-GH-of-p-products} below for the relevant definitions and notations).

	}
\end{remark}

\subsection{Symplectic $p$-decomposition of convex bodies } \label{sec-symp-decompositios}

In this section we consider the following general question:
\begin{question} \label{question-about-decomposition}
	 Which convex bodies are symplectomorphic to a symplectic $p$-product configuration of convex bodies of lower dimensions?
 \end{question}

An immediate corollary of Theorem~\ref{thm-p-product-systolic} is the following claim that relates the $p$-decomposition property of symplectic images of the Euclidean ball and Viterbo's volume-capacity conjecture.
\begin{corollary}
 \label{Cor-p-decomposition-of-the-ball-systolic}  
	If for some symplectic image of the Euclidean ball $\widetilde{B}$ in ${\mathbb R}^{2n+2m}$ there exist convex bodies $X \subset {\mathbb R}^{2n}$ and $Y \subset {\mathbb R}^{2m}$ such that $\widetilde{B} = X \times_p Y$ for some $p \neq 2$, then Conjecture~\ref{conj-viterbo} is false.
	Moreover, if $\widetilde{B} = X \times_2 Y$, and $sys_n(X) \neq 1$ or $sys_m(Y) \neq 1$, then  Conjecture~\ref{conj-viterbo} is false.
\end{corollary}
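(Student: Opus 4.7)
The plan is to derive the corollary as a direct application of Theorem~\ref{thm-p-product-systolic}, combined with the symplectic invariance of the systolic ratio. First I would observe that $sys_{n+m}$ is a symplectic invariant: both $\ehzcap$ and the symplectic volume are preserved under symplectomorphisms, so $sys_{n+m}(\widetilde{B}) = sys_{n+m}(B) = 1$ for any symplectic image $\widetilde{B}$ of the Euclidean ball.

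Next, I would argue by contradiction. Suppose Conjecture~\ref{conj-viterbo} holds, and that $\widetilde{B} = X \times_p Y$ for some convex bodies $X \subset \R^{2n}$ and $Y \subset \R^{2m}$. Applying Theorem~\ref{thm-p-product-systolic} and Viterbo's conjecture to $X$ and $Y$ individually, I would chain
\[ 1 = sys_{n+m}(\widetilde{B})^{n+m} = sys_{n+m}(X \times_p Y)^{n+m} \leq sys_n(X)^n \, sys_m(Y)^m \leq 1, \]
so all inequalities are equalities.

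For the first part of the corollary ($p \neq 2$), the equality clause in Theorem~\ref{thm-p-product-systolic} would force $p = 2$, which contradicts the hypothesis. For the second part ($p = 2$), the equality $sys_n(X)^n \, sys_m(Y)^m = 1$ combined with $sys_n(X), sys_m(Y) \leq 1$ forces $sys_n(X) = sys_m(Y) = 1$, contradicting the assumption that one of them differs from $1$.

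There is essentially no obstacle here beyond carefully tracking the equality case of Theorem~\ref{thm-p-product-systolic}; the result is a bookkeeping consequence of that theorem together with symplectic invariance, and a short remark at the start that $sys_{n+m}$ is a symplectic invariant is the only ingredient external to the theorem itself.
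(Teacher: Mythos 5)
Your proposal is correct and follows essentially the same route as the paper: both reduce the corollary to Theorem~\ref{thm-p-product-systolic} together with the symplectic invariance of the systolic ratio (so that $sys_{n+m}(\widetilde{B})=1$), the only cosmetic difference being that you invoke the equality clause of the theorem while the paper states the strict inequality for $p\neq 2$ directly.
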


We finish the Introduction  with the following consequence of Conjecture~\ref{Conj-EH-cap-of-p-products} that answers Question~\ref{question-about-decomposition} for the case of the Euclidean ball.
\begin{theorem} \label{thm-application-of-conj-p-products} 
		Assume  Conjecture \ref{Conj-EH-cap-of-p-products} holds. Then, for 
		any pair of convex bodies $X\subset \R^{2n}, \, Y \subset \R^{2m}$,  and  $p\neq2$, the product $X \times_p Y$ is not symplectomorphic to a Euclidean ball.
				%one has $B^{2n+2m}[r] \neq X \times_p Y$.
		Moreover, 
		if a symplectic image of the ball $\widetilde{B}^{2(n + m)}[r] \subset \R^{2(n+m)}$ can be written as $\widetilde{B}^{2(n + m)}[r] = X \times_2 Y$ for some convex bodies $X \subset \R^{2n}, Y \subset \R^{2m}$, then one has $c^k_{\rm EH}(X) = c^k_{\rm EH}(B^{2n}[r])$ and $c^k_{\rm EH}(Y) = c^k_{\rm EH}(B^{2m}[r])$ for every $k \geq 1$.
\end{theorem}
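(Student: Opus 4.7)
The plan is to compare the explicit Ekeland--Hofer spectrum $c^k_{\rm EH}(B^{2N}[r]) = \lceil k/N \rceil r$ of a Euclidean ball with the expression predicted by Conjecture~\ref{Conj-EH-cap-of-p-products} for $c^k_{\rm EH}(X \times_p Y)$. I will write $a_k := c^k_{\rm EH}(X)$, $b_k := c^k_{\rm EH}(Y)$, and use symplectic invariance of the $c^k_{\rm EH}$ to observe that, under either hypothesis of the theorem, $c^k_{\rm EH}(X \times_p Y) = \lceil k/(n+m) \rceil r$ for every $k \geq 1$. The central auxiliary fact I plan to invoke throughout is the multiplicity bound for Ekeland--Hofer capacities of convex bodies: for any convex $Z \subset \R^{2d}$ and any $k \geq 0$, $c^{k+d}_{\rm EH}(Z) > c^k_{\rm EH}(Z)$ (with the convention $c^0_{\rm EH}(Z) = 0$); equivalently, no value in the sequence $(c^k_{\rm EH}(Z))_{k\geq 1}$ is attained with multiplicity greater than $d$.

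For the first assertion with $p > 2$, I would set $q = p/(p-2) \in [1, \infty)$ and compare the conjectured formula $\min_{i+j=k}(a_i^q + b_j^q)^{1/q} = \lceil k/(n+m)\rceil r$ term by term. At $k=1$ this gives, after permuting $X$ and $Y$ if needed, $a_1 = r$ and $b_1 \geq r$. For $2 \leq k \leq n+m$, every mixed term with $i,j \geq 1$ is at least $2^{1/q} r > r$, so the minimum is realized by a pure term and therefore $a_k = r$ or $b_k = r$. A second permutation if necessary then gives $a_1 = a_2 = \cdots = a_{n+m} = r$, in violation of the multiplicity bound for the convex body $X \subset \R^{2n}$ (since $m \geq 1$). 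The case $1 \leq p < 2$ is handled by the parallel max formula with $q < 0$: at $k=1$ one gets $a_1^q + b_1^q = r^q$, and for $2 \leq k \leq n+m$ the pair $(1,k)$ condition $(a_1^q + b_k^q)^{1/q} \leq r$, combined with the fact that $t \mapsto t^q$ is strictly decreasing on $(0, \infty)$ for $q<0$, forces $b_k = b_1$, and symmetrically $a_k = a_1$. Again $a_1 = \cdots = a_{n+m}$, in contradiction with the multiplicity bound.

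For the second assertion, Remark~\ref{rmk-about-Conj}(I) together with Lemma~\ref{formulas_coincide} identifies $c^k_{\rm EH}(X \times_2 Y)$ with the $k$-th element of the merged non-decreasing sequence $\{a_i\}_{i \geq 1} \cup \{b_j\}_{j \geq 1}$. Matching against $\lceil k/(n+m)\rceil r$ shows that both $(a_i)$ and $(b_j)$ take values only in $\{k r : k \geq 1\}$, and that for each $k \geq 1$ the multiplicities $\alpha_k$ of $kr$ in $(a_i)$ and $\beta_k$ in $(b_j)$ satisfy $\alpha_k + \beta_k = n + m$. The multiplicity bound supplies $\alpha_k \leq n$ and $\beta_k \leq m$, forcing $\alpha_k = n$ and $\beta_k = m$. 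Hence $a_i = \lceil i/n \rceil r = c^i_{\rm EH}(B^{2n}[r])$ and $b_j = c^j_{\rm EH}(B^{2m}[r])$, as desired. The main obstacle I expect is establishing (or locating a clean reference for) the multiplicity bound for Ekeland--Hofer spectra of convex bodies; modulo this structural fact, everything else is bookkeeping against the formulas in Conjecture~\ref{Conj-EH-cap-of-p-products}.
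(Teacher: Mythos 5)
Your proposal is correct and follows essentially the same route as the paper: compare the conjectured formula for $c^k_{\rm EH}(X\times_p Y)$ with the ball's spectrum and derive a contradiction (for $p\neq 2$), respectively pin down the spectra of the factors (for $p=2$), using the strict monotonicity $c^i_{\rm EH}(Z)<c^{i+d}_{\rm EH}(Z)$ for convex $Z\subset\R^{2d}$. The ``multiplicity bound'' you flag as the main obstacle is precisely the paper's Lemma~\ref{lem-cap-n-inequality}, which is quoted there as a consequence of Theorem 1.2 and Lemma 3.4 of~\cite{Gi-Gu}, so no gap remains.
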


\noindent{\bf Notations:} Denote by ${\mathcal K^m}$ 
the class of convex bodies in ${\mathbb R}^m$, 
i.e., compact convex sets
with non-empty interior. 
For a smooth convex body, we denote by $n_K(x)$ the unit outer normal to $\partial K$ at the point $x \in \partial K$.
The support function $h_K : {\mathbb R}^{m} \rightarrow {\mathbb R}$ associated with a convex body $K  \in {\mathcal K}^m$ is given by
 $h_K(u) = \sup \{ \langle x, u \rangle \, | \, x \in K \}$. 
The  phase space ${\mathbb R}^{2n}$ is equipped with the standard symplectic structure $\omega_n = dq \wedge dp$, and the standard linear complex structure $J : \R^{2n} \to \R^{2n}$. 
The symplectic action of a closed curve $\gamma$ in ${\mathbb R}^{2n}$ is defined as the integral $A(\gamma):= \int_{\gamma} \lambda$, where $\lambda$
 is a primitive of $\omega_n$.
Finally,  $W^{1,2}(S^1, {\mathbb R}^{2n})$ is the Banach space of absolutely continuous
$2\pi$-periodic functions whose derivatives belong to $L_2(S^1, {\mathbb R}^{2n})$.

\noindent {\bf Organization of the paper:}
In Section~\ref{sec-EHZ-of-p-products} we prove Theorem~\ref{thm-p-product-systolic}, Corollary~\ref{cor-asymp-Viterbo} and Proposition~\ref{prop-p-product-EHZ-capacity}. In Section~\ref{sec-GH-of-p-products} we prove Theorem~\ref{GH_p_formula} and Theorem~\ref{thm_inf_cap}. In Section~\ref{sec-p-decomposition} we prove Corollary~\ref{Cor-p-decomposition-of-the-ball-systolic} and Theorem~\ref{thm-application-of-conj-p-products}. The paper is concluded with an appendix including  a proof of Lemmas~\ref{formulas_coincide}. 

\noindent {\bf Acknowledgements:}
We wish to thank Viktor Ginzburg, Ba\c sak G\"urel, and Marco Mazzucchelli for 
helpful comments on higher-order symplectic capacities, and to Shiri Artstien-Avidan  for many stimulating discussions and useful remarks.  
The authors received funding from the European Research Council grant No. 637386. Y.O. is partially supported by the ISF grant No. 667/18.

\section{The EHZ-capacity of symplectic $p$-products} \label{sec-EHZ-of-p-products}

In this section we prove Proposition~\ref{prop-p-product-EHZ-capacity},  Theorem~\ref{thm-p-product-systolic}, and  Corollary~\ref{cor-asymp-Viterbo}.
We will give two separate proofs of Proposition~\ref{prop-p-product-EHZ-capacity}. The first is an analytic proof based on Clarke's dual action principle (see \cite{clarke}), and the second is a geometric proof based on an analysis of the dynamics of characteristics on $p$-products. 

We start with a few preliminary definitions and remarks. 
Consider a convex body $K$ in
${\mathbb R}^{2n}$ with a smooth boundary.
The restriction of
the symplectic form $\omega$ to the  hypersurface $\partial K$
canonically defines a one-dimensional sub-bundle, ${\rm ker} \, \omega_{|_{\partial K}}$, whose integral curves form the characteristic foliation
of $\partial K$. 
Recall that  $\ehzcap(K)$ is defined to be the minimal symplectic action among the closed characteristics on $\partial K$, or equivalently, the minimal period of a closed characteristic $\gamma$, where $\gamma$ is parametrized by $$\dot{\gamma}(t) = \frac{2 J n_K(\gamma(t))}{h_K(n_K(\gamma(t)))} .$$
We remark that although the above definition of  $\ehzcap(K)$  was given only for the class of convex bodies with smooth boundary, it
can be naturally generalized to the class of convex sets in ${\mathbb R}^{2n}$
with nonempty interior (see, e.g.,~\cite{AA-O1}). 
Moreover, since all the symplectic capacities considered in this paper are known to be continuous with respect to the Hausdorff metric on the class of convex domains, in what follows we can without loss of generality assume that all the convex domains are smooth and strictly convex. 

Next we describe the dynamics of a characteristic on the $p$-product $K \times_p T$, for two convex bodies $K$ and $T$.
Let $x \in \partial K$ and $y \in \partial T$. Let $(\alpha x, \beta y) \in \partial (K \times_p T)$ be a point on the boundary of the $p$-product of $K$ and $T$, i.e., such that $\alpha^p + \beta^p = 1$ and $\alpha, \beta \geq 0$.
A direct computation gives
\begin{equation} \label{eq_p_product_normal}
    \frac{ n_{K\times_p T} (\alpha x, \beta y)}{h_{K\times_p T} (n_{K\times_p T} (\alpha x, \beta y) )} = \Bigl (\alpha^{p-1} \frac{n_K(x)}{h_K(n_K(x))}, \beta^{p-1} \frac{n_T(y)}{h_T(n_T(y))} \Bigr). 
\end{equation} 
This equation shows that the two natural projections of the characteristic directions in $K \times_p T$ are the characteristic directions in $K$ and $T$, respectively.

The main ingredient in the first proof of 
Proposition~\ref{prop-p-product-EHZ-capacity} is the following formula  based on Clark's dual action principle: for a convex body $K \subset {\mathbb R}^{2n}$ and $p \geq 1$, one has
\begin{align}
c_{\rm EHZ}(K)^{\frac p 2} & = \pi^p \min_{z \in {\mathcal E}_n, \, A(z) = 1} \frac{1}{2\pi} \int_0^{2\pi} h_K^p(\dz(t)) dt  \label{eq_1-formula-ECH}  \\
& = \pi^p \min_{z \in {\mathcal E}_n, \; A(z) > 0} \frac{1}{2 \pi} A(z)^{-\frac{p}{2}} \int_0^{2 \pi} h_K^p(\dz(t)) dt,  \label{eq_2-formula-ECH} 
\end{align}
where ${\mathcal E}_n =  \{ z \in W^{1,2}(S^1, {\mathbb R}^{2n}) \, | \, \int_0^{2\pi} z(t)dt = 0  \}$, and $A(z)$ is the symplectic action of $z$.
Here~$(\ref{eq_1-formula-ECH})$ follows, e.g., from Proposition 2.1 in~\cite{AA-O2}, and~$(\ref{eq_2-formula-ECH})$ follows by rescaling. Moreover, following the proof of Proposition 2.1 in~\cite{AA-O2} the minimizer of \eqref{eq_1-formula-ECH} 
coincides, up to translation and rescaling, with a 
%equals a 
closed characteristic on the boundary $\partial K$.
%up to a translation and a rescaling.

For the proof of Proposition~\ref{prop-p-product-EHZ-capacity} we also need the following lemma, the proof of which is a simple exercise.
\begin{lemma}
	\label{Lemma-calculus}
	For $a,b>0$ one has
	$$ \min_{x \in [0,1]} ax^{q/2} + b(1-x)^{q/2} = 
	\begin{cases}
	\min \{a,b\}, & \text{if }1 \leq q \leq 2,\\
	\left(a^\frac{2}{2-q} + b^\frac{2}{2-q}\right)^{\frac{2-q}{2}},& \text{if} \ q > 2.
	\end{cases}$$
\end{lemma}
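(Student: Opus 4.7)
The plan is to study the function $f(x) := a x^{q/2} + b(1-x)^{q/2}$ on $[0,1]$ and exploit the change in convexity of $t \mapsto t^{q/2}$ at the threshold $q=2$: this map is concave on $[0,1]$ for $1 \leq q \leq 2$ (affine at $q=2$) and strictly convex for $q > 2$. The two cases of the lemma correspond exactly to these two regimes.

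For $1 \leq q \leq 2$, both $x \mapsto a x^{q/2}$ and $x \mapsto b (1-x)^{q/2}$ are concave, so $f$ is concave on $[0,1]$. A concave function on a compact interval attains its minimum at an endpoint, and $f(0) = b$, $f(1) = a$, yielding $\min\{a,b\}$.

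For $q > 2$, $f$ is strictly convex on $[0,1]$ and its unique minimum is attained at the interior critical point. Setting $f'(x) = (aq/2) x^{q/2-1} - (bq/2)(1-x)^{q/2-1} = 0$ gives $(1-x)/x = (a/b)^{2/(q-2)}$, hence
$$x = \frac{b^{2/(q-2)}}{a^{2/(q-2)} + b^{2/(q-2)}}, \qquad 1-x = \frac{a^{2/(q-2)}}{a^{2/(q-2)} + b^{2/(q-2)}}.$$
Substituting back into $f$ and using the identity $q/(q-2) - 1 = 2/(q-2)$ to pull out a common factor of $ab$ produces $f(x) = ab \cdot \left(a^{2/(q-2)} + b^{2/(q-2)}\right)^{(2-q)/2}$. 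A short algebraic rewriting, based on $2/(2-q) = -2/(q-2)$ and on factoring $(ab)^{2/(q-2)}$ out of the sum inside the parentheses, identifies this expression with $\left(a^{2/(2-q)} + b^{2/(2-q)}\right)^{(2-q)/2}$, as stated.

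As the authors remark, the lemma is a routine calculus exercise and presents no real obstacle. The only step that demands minor care is the final algebraic reconciliation of the two equivalent closed-form expressions, where one must correctly track the negative exponents produced by the factor $2/(2-q)$ when $q > 2$.
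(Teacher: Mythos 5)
Your proof is correct: the concavity/convexity dichotomy at $q=2$ is exactly the right mechanism, the critical point for $q>2$ is computed correctly, and the algebraic identification of $ab\left(a^{2/(q-2)}+b^{2/(q-2)}\right)^{(2-q)/2}$ with $\left(a^{2/(2-q)}+b^{2/(2-q)}\right)^{(2-q)/2}$ checks out (factor $(ab)^{2/(q-2)}$ out of the sum and use $-(2-q)/(q-2)=1$). The paper itself gives no proof --- it dismisses the lemma as ``a simple exercise'' --- so there is nothing to compare against; your argument is the natural one and is complete.
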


\begin{proof}[{\bf Proof of Proposition \ref{prop-p-product-EHZ-capacity}}]
	Let $K \subset \R^{2n}$ and $T\subset \R^{2m}$ be two convex bodies with smooth boundaries, and let $1 < p < \infty$. Note that the proof of the two cases $p=1$ and $p=\infty$ follows from the above case by a standard continuation argument. 
	Let $\gamma \subset \mathcal{E}_{n+m}$ be a minimizer of \eqref{eq_1-formula-ECH} for the body $K \times_p T$ and denote by $\gamma_1$ and $\gamma_2$ the projections of $\gamma$ on $\R^{2n}$ and $\R^{2m}$, respectively.
	From the fact that $\gamma$ is a rescaling and translation of a closed characteristic on $\partial ( K \times_p T)$, its velocity $\dot{\gamma}(t)$ is equal to a positive constant times $J n_{K\times_p T} (\gamma(t))$. Using \eqref{eq_p_product_normal} and the convexity of $K$ and $T$, we get that $A(\gamma_1),A(\gamma_2) > 0$.
	Moreover, the actions of $\gamma_1$ and $\gamma_2$ satisfy $A(\gamma_1) + A(\gamma_2) = 1$.
	Note moreover that from the definition of $K \times_p T$ it follows that
	$$ h_{K \times_p T}(x,y)^q = h_K(x)^q + h_T(y)^q, $$
	for $\frac{1}{p} + \frac{1}{q} = 1$. 
	Using \eqref{eq_1-formula-ECH} for $K\times_p T$ and \eqref{eq_2-formula-ECH} for $K$ and for $T$ gives
	 	\begin{align*}
	\ehzcap(K \times_p T) & = \left[ \pi^q \frac{1}{2 \pi} \int_0^{2 \pi} h_{K \times_p T}^q(\dg(t))dt \right]^{2/q} \\
	& = \left[ \pi^q \frac{1}{2 \pi} \int_0^{2 \pi} h_K^q(\dg_1(t))dt + \pi^q \frac{1}{2 \pi} \int_0^{2 \pi} h_T^q(\dg_2(t))dt \right]^{2/q} \\
	& \geq \left[ A(\gamma_1)^{q/2} \ehzcap(K)^{q/2} + A(\gamma_2)^{q/2} \ehzcap(T)^{q/2} \right]^{2/q} .
	\end{align*}
	For $p \geq 2$, using Lemma \ref{Lemma-calculus} one has that $$\ehzcap(K \times_p T) \geq \min \{ \ehzcap(K), \ehzcap(T) \}.$$
	Next, since for every symplectic subspace $E$ and any convex body $P$ one has $\ehzcap(P) \leq \ehzcap(\pi_E P)$, where $\pi_E$ stands for the projection operation, we get that $\ehzcap(K \times_p T) \leq \min \{\ehzcap(K),\ehzcap(T)\}$, and hence $$\ehzcap(K \times_p T) = \min \{\ehzcap(K),\ehzcap(T)\}.$$
		For $1 \leq p < 2$, Lemma \ref{Lemma-calculus} gives
	\begin{align*}
	\ehzcap(K \times_p T) & \geq \left[\left(\ehzcap(K)^\frac{q}{2-q} + \ehzcap(T)^\frac{q}{2-q}\right)^{\frac{2-q}{2}} \right]^{2/q} \\
	& = \left(\ehzcap(K)^\frac{1}{1-2/p} + \ehzcap(T)^\frac{1}{1-2/p}\right)^{1-2/p}.
	\end{align*}
		For the other direction, take the minimizers $\gamma_1 \in \mathcal{E}_n$ and $\gamma_2 \in \mathcal{E}_m$ of \eqref{eq_1-formula-ECH} for $K$ and $T$, respectively. Recall that they are normalized so that $A(\gamma_1) = A(\gamma_2) = 1$.
	Consider now the curve $\gamma = \left( \ehzcap(K)^{\frac{q}{2(2-q)}} \gamma_1, \ehzcap(T)^{\frac{q}{2(2-q)}} \gamma_2 \right)$. Then, 
	\begin{eqnarray*}
	\lefteqn{\ehzcap(K \times_p T) \leq} \\
	  &&A(\gamma)^{-1} \Bigg[ \pi^q \frac{1}{2 \pi} \int_0^{2 \pi} h_K^q(\ehzcap(K)^{\frac{q}{2(2-q)}} \dg_1(t)) dt \\
	  & & \qquad \qquad \qquad \qquad \qquad \qquad \qquad + \pi^q \frac{1}{2 \pi} \int_0^{2 \pi} h_T^q(\ehzcap(T)^{\frac{q}{2(2-q)}} \dg_2(t)) dt \Bigg]^{2/q} \\
	&  =& \left(\ehzcap(K)^{\frac{q}{2-q}} + \ehzcap(T)^{\frac{q}{2-q}} \right)^{-1} \Bigg[ \ehzcap(K)^{\frac{q^2}{2(2-q)}} \ehzcap(K)^{\frac{q}{2}} \\
	& & \qquad \qquad \qquad \qquad \qquad \qquad \qquad + \ehzcap(T)^{\frac{q^2}{2(2-q)}} \ehzcap(T)^{\frac{q}{2}} \Bigg]^{2/q} \\
	&  =& \left(\ehzcap(K)^{\frac{q}{2-q}} + \ehzcap(T)^{\frac{q}{2-q}} \right)^{2/q-1} 
%	& = & \left(\ehzcap(K)^{\frac{1}{2/q-1}} + \ehzcap(T)^{\frac{1}{2/q-1}} \right)^{2/q-1} \\
	   = \left(\ehzcap(K)^{\frac{1}{1-2/p}} + \ehzcap(T)^{\frac{1}{1-2/p}} \right)^{1-2/p},
	\end{eqnarray*}
and the proof of the proposition is now complete. 
\end{proof}

Now let us examine closed characteristics on $K \times_p T$ from a dynamical point of view.
Note that \eqref{eq_p_product_normal} implies that if $\gamma_1(t) \subset \partial K$ and $\gamma_2(t) \subset \partial T$ are characteristics with 
$$\dot{\gamma_1}(t) = \frac{2 Jn_K(\gamma_1(t))}{h_K(n_K(\gamma_1(t)))} \ \ {\rm and} \  \ \dot{\gamma_2}(t) = \frac{2 Jn_T(\gamma_2(t))}{h_T(n_T(\gamma_2(t)))}$$ then $$\gamma(t) = (\alpha \gamma_1(\alpha^{p-2} t), \beta \gamma_2(\beta^{p-2} t)) $$ is a characteristic on $\partial (K \times_p T)$ with the parametrization $$\dot{\gamma}(t) = \frac{2 Jn_{K \times_p T}(\gamma(t))}{h_{K \times_p T}(n_{K \times_p T}(\gamma(t)))}.$$
Assume first that $\alpha \neq 0$ and $\beta \neq 0$. 
Note that the curve $\gamma$ is closed if and only if $\gamma_1$ and $\gamma_2$ are closed and there exists $t_0 \in \R$ such that $\gamma_1(\alpha^{p-2} t_0) = \gamma_1(0)$ and $\gamma_2(\beta^{p-2} t_0) = \gamma_2(0)$, hence $t_1 := \alpha^{p-2} t_0$ and $t_2 := \beta^{p-2} t_0$ are periods of $\gamma_1$ and $\gamma_2$, respectively, which satisfy $ t_1 / \alpha^{p-2} =  t_2 / \beta^{p-2}$. As $\alpha^p + \beta^p = 1$, one gets that 
\[\alpha^p = \frac{t_1^{\frac{p}{p-2}}}{t_1^{\frac{p}{p-2}} + t_2^{\frac{p}{p-2}}}, \]
and
\[ t_0=t_1 / \alpha^{p-2} = (t_1^{\frac{p}{p-2}} + t_2^{\frac{p}{p-2}})^\frac{p-2}{p}. \]
In the case that either $\alpha=0$ or $\beta=0$ one gets that $\gamma = (0, \gamma_2)$ or $\gamma = (\gamma_1, 0)$ and $t_0=t_2$ or $t_0=t_1$, respectively.
Finally, recall that in the above parametrization the minimal period gives the capacity, and hence 
\begin{align*}
    \ehzcap(K\times_p T) &= \min\{t_0 : t_0 \in \mathcal{P}(K \times_p T) \} \\
    &= \min\{ \min((t_1^{\frac{p}{p-2}} + t_2^{\frac{p}{p-2}})^\frac{p-2}{p}, t_1, t_2): t_1 \in \mathcal{P}(K), \; t_2 \in \mathcal{P}(T) \} \\
    &= \min((\ehzcap(K)^{\frac{p}{p-2}} + \ehzcap(T)^{\frac{p}{p-2}})^\frac{p-2}{p}, \ehzcap(K), \ehzcap(T)),
\end{align*}
where $\mathcal{P}(Q)$ is the set of periods of all the closed characteristics on $\partial Q$ for a convex body $Q \subset \R^{2n}$.
Note that if $p > 2$, then $$\forall x,y>0, \; (x^{\frac{p}{p-2}} + y^{\frac{p}{p-2}})^\frac{p-2}{p} > \max\{x, y\},$$ and if $1 \leq p < 2$, then $$\forall x,y>0, \; (x^{\frac{p}{p-2}} + y^{\frac{p}{p-2}})^\frac{p-2}{p} < \min\{ x, y \}.$$
Hence when $p >2$,
\[ \ehzcap(K\times_p T) = \min\{\ehzcap(K), \ehzcap(T) \}, \]
and when $1 \leq p < 2$,
\begin{align*}  
\ehzcap(K\times_p T) &= (\ehzcap(K)^{\frac{p}{p-2}} + \ehzcap(T)^{\frac{p}{p-2}})^\frac{p-2}{p}. 
\end{align*}
This observation reproves Proposition~\ref{prop-p-product-EHZ-capacity} in a somewhat  different way.

For the proof of Theorem \ref{thm-p-product-systolic} we need the following lemma, which is a simple corollary of the well-known formula $\vol(K) = \frac{1}{\Gamma(1+\frac{n}{p})} \int_{\R^{n}} e^{-\|x\|_K^p}dx$ for the volume of a convex body $K \subset \R^n$ (see e.g. \cite{schneider}).
\begin{lemma} \label{lem_vol}
	For convex $K \subset \R^{n}$ and $T \subset \R^m$ one has
	$$  \vol(K \times_p T) = \frac{\Gamma(\frac{n}{p}+1) \Gamma(\frac{m}{p}+1)}{\Gamma(\frac{m+n}{p}+1)} \vol(K) \vol(T) .$$
\end{lemma}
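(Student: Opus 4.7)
The plan is to derive this as a direct consequence of the probabilistic/exponential volume representation
$$ \vol(L) \;=\; \frac{1}{\Gamma(1+d/p)} \int_{\R^{d}} e^{-\|x\|_L^{p}}\, dx, $$
valid for any convex body $L \subset \R^d$ containing the origin. This formula is standard (it is obtained by writing the integral in polar coordinates adapted to $L$, using $\int_{\R^d} f(\|x\|_L)\, dx = d\,\vol(L)\int_0^\infty f(r)\, r^{d-1}\, dr$, and then reducing to the Gamma integral via the substitution $u=r^p$). I would either quote it from Schneider, as suggested in the excerpt, or include a one-line derivation.

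Given this formula, the proof of the lemma is essentially a Fubini computation. The key input, already recorded in the introduction, is that the Minkowski functional of $K \times_p T$ satisfies
$$ \|(x,y)\|_{K \times_p T}^{p} \;=\; \|x\|_K^{p} + \|y\|_T^{p}. $$
Applying the exponential volume formula in dimension $n+m$ to $K \times_p T$ and then splitting the exponential using this identity gives
$$ \vol(K \times_p T) \;=\; \frac{1}{\Gamma(1+\tfrac{n+m}{p})} \int_{\R^{n}} \int_{\R^{m}} e^{-\|x\|_K^{p}} e^{-\|y\|_T^{p}}\, dy\, dx. $$
By Fubini the double integral factorizes, and each factor equals $\Gamma(1+\tfrac{n}{p})\vol(K)$ and $\Gamma(1+\tfrac{m}{p})\vol(T)$, respectively, by a second application of the exponential formula. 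Collecting the Gamma factors yields the claim.

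There is no real obstacle here: the only nontrivial input is the exponential representation of volume, and the only step that uses the definition of $\times_p$ is the additivity of the $p$-th power of the norm, which is purely algebraic. I would present the argument in a single short display, essentially as above, with the exponential volume formula either cited or proved in one line.
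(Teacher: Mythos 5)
Your proof is correct and is precisely the argument the paper has in mind: the authors state the lemma as ``a simple corollary'' of the exponential volume formula $\vol(L)=\frac{1}{\Gamma(1+d/p)}\int_{\R^d}e^{-\|x\|_L^p}\,dx$, and your Fubini computation using $\|(x,y)\|_{K\times_p T}^p=\|x\|_K^p+\|y\|_T^p$ is exactly that deduction.
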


\begin{proof}[{\bf Proof of Theorem \ref{thm-p-product-systolic}}]
	
	First let $p \geq 2.$ Then, from Lemma~\ref{lem_vol} and the fact that $K \times_2 T \subseteq K \times_pT$ it follows that
	$$ \vol(K\times_p T) \geq \vol(K \times_2 T) = \frac{m! n!}{(m+n)!} \vol(K) \vol(T),$$
	with equality only when $p=2$. Next, 
	from  Proposition~\ref{prop-p-product-EHZ-capacity}  it follows that
	$$\ehzcap(K \times_p T) = \min\{\ehzcap(K), \ehzcap(T)\} \leq \ehzcap(K)^\frac{n}{m+n} \ehzcap(T)^\frac{m}{m+n}, $$
	with equality only when $\ehzcap(K) = \ehzcap(T)$.
	Hence
	$$ sys_{m+n}(K \times_p T)^{m+n} \leq \frac{\ehzcap(K)^n}{n! \vol(K)} \frac{\ehzcap(T)^m}{m! \vol(T)} = sys_n(K)^n sys_m(T)^m,$$
	with equality only when $p=2$ and when $\ehzcap(K) = \ehzcap(T)$.

	Next, let $1 \leq p < 2.$
	Using Proposition~\ref{prop-p-product-EHZ-capacity} and the
	following inequality, which is a simple consequence of the inequality of the weighted arithmetic and geometric means, one has:
	\begin{align} \label{eq-am-gm}
	     \ehzcap(K\times_p T) &= \left( \ehzcap(K)^{\frac{p}{p-2}} + \ehzcap(T)^{\frac{p}{p-2}} \right)^\frac{p-2}{p} \\
	     &\leq \left( \frac{n+m}{m^\frac{m}{n+m} n^\frac{n}{n+m}} \right)^\frac{p-2}{p} \ehzcap(K)^\frac{n}{n+m} \ehzcap(T)^\frac{m}{n+m} \nonumber
	\end{align}
Plugging \eqref{eq-am-gm} in the formula for the systolic ratio gives
	\begin{align*}
	sys_{m+n}(K \times_p T)^{m+n} &= \frac{\left( \ehzcap(K)^{\frac{p}{p-2}} + \ehzcap(T)^{\frac{p}{p-2}} \right)^{\frac{p-2}{p} (m+n)}}{(m+n)! \vol(K \times_p T)} \\
	&\leq \frac{ \left( \frac{(n+m)^{m+n}}{m^m n^n} \right)^\frac{p-2}{p} \ehzcap(K)^n \ehzcap(T)^m}{(n+m)! \frac{\Gamma(1+2n/p) \Gamma(1+2m/p)}{\Gamma(1+(2n+2m)/p)} \vol(K) \vol(T)} \\
	&= g \left (\frac{1}{p} \right) \frac{\ehzcap(K)^n}{n! \vol(K)} \frac{\ehzcap(T)^m}{m! \vol(T)},
	\end{align*}
	where
	$$ g(x) := \left( \frac{(n+m)^{m+n}}{m^m n^n} \right)^{1-2x} \frac{\Gamma(1+(2n+2m)x)}{\Gamma(1+2nx)\Gamma(1+2mx)} \frac{n!m!}{(m+n)!}. $$
	Hence, to finish the proof it remains to check that % one has
	$$ g(x) \leq 1 ,$$
	for every $\frac{1}{2} < x \leq 1$.
	As in \cite[Lemma 4.5]{matthias_henze}, a direct computation shows that $$\frac{d^2}{dx^2} \ln[ g(x) ] > 0,$$
	and hence  $$g(x) \leq \max \{ g(1/2), g(1) \},$$ for every $x \in [1/2,1]$.
	As $g(1/2) = 1$ it remains to show that 
	$$ 1 \geq g(1) = \frac{m^m n^n}{(n+m)^{n+m}} \frac{(2n+2m)!}{(2n)!(2m)!} \frac{n! m!}{(n+m)!} .$$
	One is able to show this directly using, e.g., the following bounds:
	$$ \sqrt{2\pi} n^{n+1/2} e^{-n} e^{\frac{1}{12n+1}} < n! < \sqrt{2\pi} n^{n+1/2} e^{-n} e^{\frac{1}{12n}} .$$
	Alternatively, note that $g(1)$ is the systolic ratio of a 1-product of Euclidean balls of different radii. Indeed, choose $K \subset \R^{2n}$ to be the Euclidean ball of capacity 1, and choose $T \subset \R^{2m}$ to be the Euclidean ball of capacity $c(T) = \frac{n}{m} c(K).$ Now one has equality in \eqref{eq-am-gm} above when $p=1,$ and hence 
	$$ sys_{m+n}(K \times_1 T)^{m+n} = g(1) sys_n(K)^n sys_m(T)^m = g(1).$$
	Since $K \times_1 T$ is $S^1$-symmetric, $\ehzcap(K \times_1 T) = c_{\rm Gr}(K \times_1 T)$ and the largest symplectic ball inside $K \times_1 T$ is a Euclidean ball (cf. Proposition 1.4 in~\cite{Gu-Ha-Ra}). This gives $g(1) = sys_{m+n}(K \times_1 T)^{m+n} < 1$, as required.
	Moreover, $\frac{d^2}{dx^2} \ln[ g(x) ] > 0$ implies that $g(x) < g(1/2)$ for $x \in (1/2,1]$. Therefore, the inequality is strict whenever $1 \leq p<2.$
	\end{proof}

\begin{proof}[{\bf Proof of Corollary~\ref{cor-asymp-Viterbo}}]
	Assume without loss of generality that $m \geq n$. For a convex $K \subset \R^{2n}$ take a ball $B^{2(m-n)}[\ehzcap(K)]$ with the same capacity as $K$. 
	Now Theorem~\ref{thm-p-product-systolic} gives
	\begin{align*} sys_n(K)^n & = sys_n(K)^n sys_{m-n}(B^{2(m-n)}[\ehzcap(K)])^{m-n} \\
	& = sys_m(K \times_2 B^{2(m-n)}[\ehzcap(K)])^m.
	\end{align*}
	Thus, Conjecture \ref{conj-viterbo} in dimension $2m$ implies Conjecture \ref{conj-viterbo} in dimension $2n$ for any $n \leq m$.
	Moreover, if we know that $sys_n(K) \leq \alpha(n)$ for any $K$ and $n$, for some function $\alpha$ that satisfies $\alpha(n) \xrightarrow{n \to \infty} 1$, then we can take a product of $K$ with itself $m$-times to conclude that
	$$ sys_{nm}(K \times_2 K \times_2 \cdots \times_2 K)^{mn} = (sys_n(K)^n)^m, $$
	and since $sys_{nm}(K \times_2 \cdots \times_2 K) \leq \alpha(nm)$, letting $m \to \infty$ we get $$sys_n(K) \leq 1.$$
	This completes the proof of the corollary. 
\end{proof}

\section{Capacities of $p$-products of toric domains} \label{sec-GH-of-p-products}
In this section we prove Theorem \ref{GH_p_formula}~ and Theorem~\ref{thm_inf_cap}.
Define the moment map $\mu : \C^n \to \R^n_+$ by $\mu(z_1,\ldots,z_n) = \pi(|z_1|^2,\ldots,|z_n|^2)$.
For a domain $\Omega \subset \R^{n}_+$ define the corresponding toric domain by $X_\Omega = \mu^{-1}(\Omega) \subset \C^n$ and the continuation of $\Omega$ to an unconditional domain by $\widehat{\Omega} = \{ (x_1,\ldots,x_n) \in \R^n | (|x_1|,\ldots,|x_n|) \in \Omega \}$.
We say $X_\Omega$ is a convex (concave) toric domain if $\widehat{\Omega}$ is a convex (concave) set.
For more information on toric domains see, e.g., Section 2 of~\cite{Gu-Ha-Ra}. For the proof of the theorem mentioned above we shall need the following two simple lemmas.

\begin{lemma} \label{lem-1-about-toric-p-product} Let $X_{\Omega_1} \subset {\mathbb C}^{n}$ and $X_{\Omega_2} \subset {\mathbb C}^m$ be two toric domains. Then, 
for every $1 \leq p \leq \infty$ one has 
	$X_{\Omega_1} \times_p X_{\Omega_2} = X_{\Omega_1 \times_{p/2} \Omega_2} $.
\end{lemma}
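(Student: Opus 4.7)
The proof is a direct chase through the definitions once we pinpoint the right scaling identity. The plan is to first record the compatibility between scalar multiplication and the moment map, namely that $\mu(\lambda z) = \lambda^2 \mu(z)$ for any $\lambda \geq 0$, and hence
\[ \lambda X_\Omega = \mu^{-1}(\lambda^2 \Omega) = X_{\lambda^2 \Omega} \]
for any domain $\Omega \subset \R^n_+$. This single observation is the only place where anything specific about toric domains enters, and it explains why the index $p$ on the symplectic side becomes $p/2$ on the moment side: scaling by $\lambda^{1/p}$ in $\C^n$ corresponds to scaling by $\lambda^{2/p}$ in the moment polytope.

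Next I would combine this with the obvious identity $X_{A} \times X_{B} = X_{A \times B}$ for toric domains (immediate from the fact that $\mu$ acts coordinate-wise) and the fact that $\mu^{-1}$ commutes with unions. Plugging these into the definition, I get
\begin{align*}
X_{\Omega_1} \times_p X_{\Omega_2} &= \bigcup_{0 \leq t \leq 1} \bigl( (1-t)^{1/p} X_{\Omega_1} \times t^{1/p} X_{\Omega_2} \bigr) \\
&= \bigcup_{0 \leq t \leq 1} \bigl( X_{(1-t)^{2/p} \Omega_1} \times X_{t^{2/p} \Omega_2} \bigr) \\
&= \bigcup_{0 \leq t \leq 1} X_{(1-t)^{2/p} \Omega_1 \times t^{2/p} \Omega_2} \\
&= X_{\Omega_1 \times_{p/2} \Omega_2},
\end{align*}
which is precisely the claim.

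There is no genuine obstacle here; the only thing to be careful about is that for $1 \leq p < 2$ the index $p/2 \in [1/2,1)$ lies outside the regime in which the $p$-product was originally introduced (and need not preserve convexity), but the set-theoretic formula $\bigcup_t (1-t)^{2/p} \Omega_1 \times t^{2/p} \Omega_2$ is perfectly well defined, and it is this set-theoretic identity that the lemma asserts. I would add a brief sentence noting this interpretive point so that later applications to concave toric domains (where $p/2 < 1$ can occur) are unambiguous.
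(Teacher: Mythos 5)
Your proof is correct and rests on the same key fact as the paper's, namely the quadratic scaling $\mu(\lambda z)=\lambda^2\mu(z)$ of the moment map; the paper merely packages the definition-chase through the gauge-function identity $\|\mu(x)\|_\Omega=\|x\|_{X_\Omega}^2$ and the characterization $\|x\|^p_{X_{\Omega_1}}+\|y\|^p_{X_{\Omega_2}}\leq 1$ of the $p$-product, whereas you work directly with the union definition. Your closing remark about the case $p/2<1$ is a worthwhile clarification, since the paper does go on to use $\times_{p/2}$ with $p/2<1$ for concave toric domains.
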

\begin{proof} [{\bf Proof of Lemma~\ref{lem-1-about-toric-p-product}}]
	First note that $\|\mu(x)\|_\Omega = \|x\|_{X_\Omega}^2$. Indeed, since $\mu(\alpha x) = \alpha^2 \mu(x)$, one has $\frac{\mu(x)}{\lambda^2} \in \Omega \iff \frac{x}{\lambda} \in X_\Omega$ for $\lambda \in \R_+$.
	Next, for $(x,y) \in \C^n \times \C^m$, 
	\begin{align*}
	(x,y) \in X_{\Omega_1 \times_{p/2} \Omega_2} & \iff (\mu(x),\mu(y)) \in \Omega_1 \times_{p/2} \Omega_2 \\
	& \iff \|\mu(x)\|_{\Omega_1}^{p/2} + \|\mu(y)\|_{\Omega_2}^{p/2} \leq 1 \\ 
	& \iff \|x\|^p_{X_{\Omega_1}} + \|y\|^p_{X_{\Omega_2}} \leq 1 \\
	& \iff (x,y) \in X_{\Omega_1} \times_p X_{\Omega_2},
	\end{align*}
which completes the proof. 
\end{proof}

\begin{lemma} \label{lem-2-about-toric-p-product}
	For convex domains $\Omega_1 \subset \R^n$ and $\Omega_2 \subset {\mathbb R}^m$, the domain $\Omega_1 \times_p \Omega_2$ is convex for $p\geq 1$, and for concave domains $\Omega_1 \subset {\mathbb R}^n$ and $\Omega_2 \subset \R^m$, the domain $\Omega_1 \times_p \Omega_2$ is concave for $0 < p\leq 1$.
\end{lemma}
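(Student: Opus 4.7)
The plan is to reduce both parts to standard composition rules for convex and concave functions applied to the Minkowski gauge of $\Omega_i$. For a star--shaped domain $\Omega\subset\R^n_+$ containing the origin, write $\|x\|_\Omega = \inf\{\lambda>0 : x/\lambda\in\Omega\}$ for its gauge, which is positively $1$-homogeneous. Unwinding the definition of the $p$-product yields
\[
(x,y)\in \Omega_1\times_p\Omega_2 \iff \|x\|_{\Omega_1}^p + \|y\|_{\Omega_2}^p \leq 1,
\]
so $\Omega_1\times_p\Omega_2$ is exactly the $1$-sublevel set of the function $F(x,y):=\|x\|_{\Omega_1}^p+\|y\|_{\Omega_2}^p$ inside $\R^{n+m}_+$.

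For the convex case, I would first observe that convexity of $\Omega_i$ makes $\|\cdot\|_{\Omega_i}$ sublinear, hence convex and nonnegative. Since $p\geq 1$, the map $t\mapsto t^p$ is convex and nondecreasing on $[0,\infty)$, so each composition $\|\cdot\|_{\Omega_i}^p$ is convex; then $F$ is a sum of convex functions, and its sublevel set $\Omega_1\times_p\Omega_2$ is convex.

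For the concave case, the first step will be to translate the hypothesis into a statement about the gauge. Using positive $1$-homogeneity, the strict superlevel set $\{x\in\R^n_+ : \|x\|_\Omega > c\}$ is just $c\cdot(\R^n_+\setminus\overline{\Omega})$, so $\Omega$ being concave (i.e.\ $\R^n_+\setminus\Omega$ convex) is equivalent to quasiconcavity of $\|\cdot\|_\Omega$ on $\R^n_+$, which together with $1$-homogeneity upgrades to actual concavity by the standard superadditivity argument. Then for $0<p\leq 1$ the map $t\mapsto t^p$ is concave and nondecreasing on $[0,\infty)$, so each $\|\cdot\|_{\Omega_i}^p$ remains concave on $\R^{n_i}_+$. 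The sum $F$ is therefore concave, in particular quasiconcave, so its strict superlevel set at height $1$, which is precisely $\R^{n+m}_+\setminus(\Omega_1\times_p\Omega_2)$, is convex. This is exactly the desired concavity of the $p$-product.

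The only step that requires a little care is the equivalence in the concave case between $\R^n_+\setminus\Omega$ being convex and $\|\cdot\|_\Omega$ being concave on $\R^n_+$; once this is set up, everything else is routine composition of convex/concave functions with monotone maps.
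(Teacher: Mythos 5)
Your proof is correct, and it reaches the same two underlying facts as the paper --- convexity/concavity of the gauges of $\Omega_1,\Omega_2$ combined with the behaviour of $t\mapsto t^p$ --- but packages them differently. The paper works directly with the positively $1$-homogeneous gauge of the product: it writes $\|(x,y)\|_{\Omega_1\times_p\Omega_2}=\bigl(\|x\|_{\Omega_1}^p+\|y\|_{\Omega_2}^p\bigr)^{1/p}$ and proves it is subadditive for $p\geq 1$ (via subadditivity of each gauge plus Minkowski's inequality in $\R^2$) and superadditive for $0<p\leq 1$ (via the reverse Minkowski inequality), which immediately gives convexity, respectively concavity, of the unit body. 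You instead drop homogeneity, show that the non-homogeneous function $F(x,y)=\|x\|_{\Omega_1}^p+\|y\|_{\Omega_2}^p$ is convex (resp.\ concave) by composition rules, and pass to its sublevel (resp.\ strict superlevel) set; the two arguments are essentially equivalent, since convexity of $a^p+b^p$ in this setting encodes the same content as Minkowski's inequality. One point where your write-up is actually more careful than the paper's: in the concave case you spell out why concavity of the domain $\Omega$ is equivalent to superadditivity (hence concavity, by homogeneity) of $\|\cdot\|_\Omega$ on the positive orthant, whereas the paper takes superadditivity of the gauge as the operative meaning of ``concave'' without comment. Just make sure the homogeneity-plus-quasiconcavity-implies-superadditivity step excludes the degenerate case $\|x\|_\Omega=0$ for $x\neq 0$, which is harmless here since the domains are bounded.
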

\begin{proof} [{\bf Proof of Lemma~\ref{lem-2-about-toric-p-product}}]	For $(x,y),(w,z) \in {\Omega_1} \times_p {\Omega_2}$ and $ p \geq 1$, it follows from Minkowski inequality that
	\begin{align*}
	\|(x,y) + (w,z)\|_{\Omega_1 \times_p \Omega_2} & = \left( \|x+w\|_{\Omega_1}^p + \|y+z\|_{\Omega_2}^p \right)^{1/p} \\
	& \leq \left( \left( \|x\|_{\Omega_1} + \|w\|_{\Omega_1} \right)^p + \left( \|y\|_{\Omega_2} + \|z\|_{\Omega_2} \right)^p \right)^{1/p}\\
	& \leq \left( \|x\|_{\Omega_1}^p + \|y\|_{\Omega_2}^p \right)^{1/p} + \left( \|w\|_{\Omega_1}^p + \|z\|_{\Omega_2}^p \right)^{1/p} \\
	& = \|(x,y)\|_{{\Omega_1} \times_p {\Omega_2}} + \|(w,z)\|_{{\Omega_1} \times_p {\Omega_2}},
	\end{align*}
	which proves the first part of the lemma. In a similar manner, a direct computation shows that for $0 < p \leq 1$, using now the reverse Minkowski inequality,  one has
	$$	\|(x,y) + (w,z)\|_{{\Omega_1} \times_p {\Omega_2}} \geq  \|(x,y)\|_{{\Omega_1} \times_p {\Omega_2}} + \|(w,z)\|_{{\Omega_1} \times_p {\Omega_2}} ,$$
	which proves the second part of the lemma. 
\end{proof}

Next, consider the family $\{c^k_{\rm GH} \}_{k=1}^{\infty}$ of the Gutt-Hutchings symplectic capacities introduced in~\cite{Gu-Ha}. The proofs of Theorem~\ref{GH_p_formula} %and~\ref{thm_inf_cap} are 
is based on the followin:
\begin{theorem}[Gutt, Hutchings \cite{Gu-Ha}] \label{GH-thm-toric-domains}
	For a convex toric domain $X_\Omega \subset {\mathbb C}^n$,
	$$c^k_{\rm GH}(X_\Omega) = \min \left\{ h_\Omega(v) \bigg| v = (v_1,\ldots,v_n) \in \N^n, \, \sum_{i=1}^n v_i = k \right\}, $$
	and for a concave toric domain $X_\Omega \subset {\mathbb C}^n$,
	$$c^k_{\rm GH}(X_\Omega) = \max\left\{[v]_\Omega \;\bigg|\; v\in\N^n_{>0},\;\sum_iv_i = k+n-1\right\}, $$
	where $ [v]_\Omega = \min\left\{\langle v,w\rangle \;\big|\; w\in\Sigma\right\} $, and $\Sigma$ is the clouser of the set $\partial \Omega \cap {\mathbb R}^n_{>0}$.
\end{theorem}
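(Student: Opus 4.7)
The plan is to follow the strategy of Gutt and Hutchings and prove both formulas by a direct computation of the positive $S^1$-equivariant symplectic homology $SH^{S^1,+}$, whose action-filtered version defines $c^k_{\rm GH}$. The first step is a reduction: by continuity of $c^k_{\rm GH}$ with respect to $C^0$-convergence of star-shaped domains, I may assume $\Omega$ is smooth and that the boundary $\partial X_\Omega$ is a smooth hypersurface with nondegenerate Reeb flow in the Morse--Bott sense. Standard density arguments approximate an arbitrary convex (resp.\ concave) toric $\Omega$ by generic ones for which every rational direction $v$ gives an isolated $T^{n-1}$-family of closed Reeb orbits.

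In the convex case, I would classify the Reeb orbits on $\partial X_\Omega$. Because $\Omega$ is convex, each outward normal direction at a smooth boundary point of $\partial\Omega$ is realized uniquely, and periodic Reeb orbits in $\partial X_\Omega$ come in Morse--Bott families fibered over rational directions $v=(v_1,\dots,v_n)\in \mathbb{N}^n$: the orbits in the $v$-family lie over the boundary point of $\Omega$ with outward normal proportional to $v$, and their common action is exactly $h_\Omega(v)=\sup_{x\in\Omega}\langle v,x\rangle$. The Conley--Zehnder/Morse--Bott index computation (following the usual transversally nondegenerate formula for toric contact boundaries) then shows that the generator of $SH^{S^1,+}$ arising from this family sits in degree $2(\sum v_i)+n-1-\dim(\text{family})$, and that the contribution to $c^k_{\rm GH}$ is picked out precisely by $\sum v_i=k$. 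Taking the minimum action over such $v$ yields the convex formula. One must then verify that no cancellations in the differential occur at the relevant filtration level; this is where positivity of $\Omega$ and convexity are essential.

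In the concave case, the geometry is flipped: the relevant portion of $\partial\Omega$ is the part facing outward from the origin, i.e.\ the closure $\Sigma$ of $\partial\Omega\cap\mathbb{R}^n_{>0}$, and Reeb orbits sit in families parametrized by vectors $v\in\mathbb{N}^n_{>0}$ (strictly positive, reflecting that the orbits genuinely wind around every factor). The action of the family is the infimum $[v]_\Omega=\min_{w\in\Sigma}\langle v,w\rangle$, which replaces the support function by the min-support on $\Sigma$. The index calculation in the concave setting shifts by $n-1$ compared to the convex one because the Morse--Bott families of Reeb orbits have different dimensions (they now fiber over points of $\Sigma$, which is the "opposite face"), so the constraint becomes $\sum v_i = k+n-1$. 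Extracting $c^k_{\rm GH}$ then gives the maximum of $[v]_\Omega$ over such $v$.

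The principal obstacle is the index and differential bookkeeping: one must pin down the exact Morse--Bott Conley--Zehnder indices of each family of orbits and show that the $k$-th nonzero class in $SH^{S^1,+}$ below a given action level is born precisely at the claimed combinatorial value, with no homological cancellation destroying it. A secondary difficulty is handling perturbations from the degenerate toric setting to a Morse--Bott setup and back: one needs to verify that the min/max over $\mathbb{N}^n$ (resp.\ $\mathbb{N}^n_{>0}$) is attained and stable under the approximation used in the reduction step, which uses convexity/concavity crucially to rule out stray contributions from directions outside the coordinate positive orthant.
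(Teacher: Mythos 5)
This statement is imported verbatim from Gutt--Hutchings (Theorems 1.6 and 1.14 of \cite{Gu-Ha}); the paper offers no proof of it and uses it as a black box in the proof of Theorem \ref{GH_p_formula}, so there is no internal argument to compare yours against. Judged on its own terms, your proposal is a reasonable roadmap of the original Gutt--Hutchings strategy, but it is not a proof: every step that actually carries the content of the theorem is named as an ``obstacle'' and then deferred. Concretely, (i) the Morse--Bott Conley--Zehnder index of the $T^{n-1}$-family of Reeb orbits with winding vector $v$ is asserted to be $2\sum v_i + n - 1 - \dim(\text{family})$ without derivation, and this is exactly the computation that converts the degree constraint defining $c^k_{\rm GH}$ into the combinatorial constraint $\sum v_i = k$ (resp.\ $\sum v_i = k+n-1$); (ii) the claim that ``no cancellations in the differential occur at the relevant filtration level'' is precisely what must be proved, and you give no mechanism (in \cite{Gu-Ha} this is handled by degree parity and by sandwiching $X_\Omega$ between explicitly computable domains via monotonicity, not by a direct differential computation); (iii) the passage from the degenerate toric picture to a nondegenerate perturbation and back, including why the min/max is attained and why directions outside $\N^n$ (resp.\ $\N^n_{>0}$) contribute nothing, is asserted to ``use convexity crucially'' without saying how.

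A further point worth flagging: your outline presents the argument as a single direct computation of $SH^{S^1,+}(X_\Omega)$ with its action filtration, whereas the actual proof in \cite{Gu-Ha} establishes the two inequalities by different means --- the upper bound for convex domains comes from monotonicity applied to an explicit enclosing domain whose capacities are known, and the lower bound from an analysis of the Reeb dynamics after perturbation (and symmetrically for concave domains). If you intend to reprove the theorem rather than cite it, you would need to either carry out the full Floer-theoretic computation you sketch (including the index and cancellation bookkeeping) or reproduce the monotonicity-based sandwich argument; as written, the proposal establishes neither inequality.
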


\begin{proof} [{\bf Proof of Theorem \ref{GH_p_formula}}]
Consider first the case of convex toric domains $K$ and $T$, and $p > 2$. As before, the case $p=2$ will follow from a standard continuation argument. Then, using Lemma~\ref{lem-1-about-toric-p-product} and Lemma~\ref{lem-2-about-toric-p-product}, one has that
%	Let us start from the convex and $p \geq 2$ case.
%	Since $p \geq 2$, 
	$K \times_p T = X_{\mu(K) \times_{p/2} \mu(T)}$ is a convex toric domain, and hence, by Theorem~\ref{GH-thm-toric-domains},
	\begin{eqnarray*}
	\lefteqn{c^k_{\rm GH}(K \times_p T) = c^k_{\rm GH}(X_{\mu(K) \times_{p/2} \mu(T)})} \\
	&= & \min \left\{ h_{\mu(K) \times_{p/2} \mu(T)}(v) \bigg| v \in \N^{n+m}, \sum_{i=1}^{n+m} v_i = k \right\} \\
	&= & \min \Bigg\{ \left( h_{\mu(K)}^\frac{p}{p-2}(v_1,\ldots,v_n) +  h_{\mu(T)}^\frac{p}{p-2}(v_{n+1},\ldots,v_{n+m}) \right)^\frac{p-2}{p} \\
	& & \qquad \qquad \qquad \bigg| v \in \N^{n+m}, \sum_{i=1}^{n+m} v_i = k \Bigg\} \\
	&= & \min_{i+j = k} \Bigg[ \min \left\{ h_{\mu(K)}^\frac{p}{p-2}(v) \bigg| v\in\N^n, \sum_{r=1}^n v_r = i \right\} \\
	& & \qquad \qquad \qquad + \min \left\{ h_{\mu(T)}^\frac{p}{p-2}(v) \bigg| v\in\N^m, \sum_{r=1}^m v_r = j \right\} \Bigg]^\frac{p-2}{p} \\
	&= & \min_{i+j = k} \left[ c^i_{\rm GH}(K)^\frac{p}{p-2} + c^j_{\rm GH}(T)^\frac{p}{p-2} \right]^\frac{p-2}{p}.
	\end{eqnarray*}
For the case of concave domains $K$ and $T$ and $1 \leq p \leq 2$ one has the following.
	$$ [(x,y)]_{\mu(K) \times_{p/2} \mu(T)} = \left( [x]_{\mu(K)}^\frac{p}{p-2} + [y]_{\mu(T)}^\frac{p}{p-2} \right)^\frac{p-2}{p}, $$
	and hence, using similar arguments as above, one has
	\begin{eqnarray*}
	\lefteqn{c^k_{\rm GH}(K \times_p T) = c^k_{\rm GH}(X_{\mu(K) \times_{p/2} \mu(T)})}\\
	&= & \max \Bigg\{ [v]_{\mu(K) \times_{p/2} \mu(T)} \bigg| v \in \N^{n+m}_{>0}, \sum_{i=1}^{n+m} v_i = k + m + n -1 \Bigg\} \\
	&= & \max \Bigg\{ \left( [(v_1,\ldots,v_n)]_{\mu(K)}^\frac{p}{p-2} +  [(v_{n+1},\ldots,v_{n+m})]_{\mu(T)}^\frac{p}{p-2} \right)^\frac{p-2}{p} \\
	& & \qquad \qquad \qquad \bigg| v \in \N^{n+m}_{>0}, \sum_{i=1}^{n+m} v_i = k+m+n-1 \Bigg\} \\
	&= & \max_{\substack{i+j = k+1 \\i>0,j>0}} \Bigg[ \min \left\{ [v]_{\mu(K)}^\frac{p}{p-2} \bigg| v\in\N^n_{>0}, \sum_{r=1}^n v_r = i+n-1 \right\} \\
	& &  \qquad \qquad \qquad + \min \left\{ [v]_{\mu(T)}^\frac{p}{p-2} \bigg| v\in\N^m_{>0}, \sum_{r=1}^m v_r = j+m-1 \right\} \Bigg]^\frac{p-2}{p} \\
	&= & \max_{\substack{i+j = k+1 \\i>0,j>0}} \left[ c^i_{\rm GH}(K)^\frac{p}{p-2} + c^j_{\rm GH}(T)^\frac{p}{p-2} \right]^\frac{p-2}{p},
	\end{eqnarray*}
and the proof of the theorem is complete. 
\end{proof}

\begin{proof}[{\bf Proof of Theorem \ref{thm_inf_cap}}]
 Note that it is enough to prove \eqref{eq-c-inf} for the product of two convex domains. Namely, let $K \subset \R^{2n}$ and $T \subset {\mathbb R}^{2m}$ be convex domains such that the limits $\frac{c^k_{\rm EH}(K)}{k}$ and $\frac{c^k_{\rm EH}(T)}{k}$ exist. Then we wish to prove that 
 $$ c_\infty(K \times_p T) = \left( c_\infty(K)^{\frac{-p}{2}} + c_\infty(T)^{\frac{-p}{2}} \right)^{\frac{-p}{2}} .$$
 We will prove this in the case where $p>2$. The proof for $1 \leq p < 2$ is similar. One can derive the case $p=2$ by continuity of the capacities and the inclusions 
 $$K \times_{2-\varepsilon} T \subset K \times_2 T \subset K \times_{2+\varepsilon} T.$$
 First we will show that
 $$ c_\infty(K \times_p T) \leq \left( c_\infty(K)^{\frac{-p}{2}} + c_\infty(T)^{\frac{-p}{2}} \right)^{\frac{-p}{2}} .$$
 Denote 
 $$w = \frac{c_\infty(T)^\frac{p}{2}}{c_\infty(K)^\frac{p}{2}+c_\infty(T)^\frac{p}{2}}$$
 and choose 
 $$i_k = \ceil{k w}, \qquad j_k = k - i_k = \floor{k(1-w)}.$$
 By the definition of $c_\infty$, for every $\varepsilon>0$ there exists a large enough $k$ such that 
 $$ c^{i_k}_{\rm EH}(K) < i_k(c_\infty(K)+\varepsilon), \quad c^{j_k}_{\rm EH}(T) < j_k(c_\infty(T)+\varepsilon), \quad \frac{1}{k} < \varepsilon. $$
Using Conjecture \ref{Conj-EH-cap-of-p-products} we get
\begin{align*}
c^k_{\rm EH}(K \times_p T) &\leq \left[ c^{i_k}_{\rm EH}(K)^\frac{p}{p-2} + c^{j_k}_{\rm EH}(T)^\frac{p}{p-2} \right]^\frac{p-2}{p}\\
& < \left[ i_k^\frac{p}{p-2}(c_\infty(K)+\varepsilon)^\frac{p}{p-2} + j_k^\frac{p}{p-2}(c_\infty(T)+\varepsilon)^\frac{p}{p-2} \right]^\frac{p-2}{p} \\
& \leq \left[ k^\frac{p}{p-2} (w + \frac{1-\{wk\}}{k})^\frac{p}{p-2}(c_\infty(K)+\varepsilon)^\frac{p}{p-2} + k^\frac{p}{p-2} (1-w)^\frac{p}{p-2}(c_\infty(T)+\varepsilon)^\frac{p}{p-2} \right]^\frac{p-2}{p} \\
& \leq k \left( \left[ w^\frac{p}{p-2} c_\infty(K)^\frac{p}{p-2} + (1-w)^\frac{p}{p-2} c_\infty(T)^\frac{p}{p-2}   \right]^\frac{p-2}{p} + O(\varepsilon) \right),
\end{align*}
By substituting $w$ one gets
\begin{align} \label{eq-upper-bound-c-inf}
\nonumber \frac{c^k_{\rm EH}(K \times_p T)}{k} &< \frac{c_\infty(K) c_\infty(T)}{c_\infty(K)^\frac{p}{2}+c_\infty(T)^\frac{p}{2}} \left(c_\infty(T)^\frac{p}{2}+c_\infty(K)^\frac{p}{2}\right)^\frac{p-2}{p} + O(\varepsilon) \\
	&= \left(c_\infty(K)^{-\frac{p}{2}} + c_\infty(T)^{-\frac{p}{2}}\right)^{-\frac{2}{p}} + O(\varepsilon).
\end{align}

	For the other direction, denote by $\hat{i}_k, \hat{j}_k$ the minimizers in the formula for $c^k_{\rm EH}(K \times_p T)$ from Conjecture \ref{Conj-EH-cap-of-p-products}. 
	We consider two cases. The first is that both $\hat{i}_k$ and $\hat{j}_k$ goes to infinity as $k$ goes to infinity, and the second is that one of the indices, say $\hat{i}_k$, is bounded. In the first case, 
	since $\hat{i}_k \to \infty$ and $\hat{j}_k \to \infty$ as $k \to \infty$, the definition of $c_\infty$ implies that for every $\varepsilon>0$ there exists a large enough $k$ such that
	$$ c^{\hat{i}_k}_{\rm EH}(K) > \hat{i}_k(c_\infty(K)-\varepsilon), \quad c^{\hat{j}_k}_{\rm EH}(T) > \hat{j}_k(c_\infty(T)-\varepsilon). $$
	Hence, one has
	\begin{align*}
	c^k_{\rm EH}(K \times_p T) &= \left[ c^{\hat{i}_k}_{\rm EH}(K)^\frac{p}{p-2} + c^{\hat{j}_k}_{\rm EH}(T)^\frac{p}{p-2} \right]^\frac{p-2}{p} \\
	& > \left[  \hat{i}_k^\frac{p}{p-2}(c_\infty(K)-\varepsilon)^\frac{p}{p-2} + \hat{j}_k^\frac{p}{p-2}(c_\infty(T)-\varepsilon)^\frac{p}{p-2} \right]^\frac{p-2}{p}.
	\end{align*} 
	Without the restriction that $\hat{i}_k,\hat{j}_k \in \N$, the right hand side is minimized when 
	$$ \hat{i}_k:= \frac{k (c_\infty(T)-\varepsilon)^\frac{p}{2}}{(c_\infty(K)-\varepsilon)^\frac{p}{2}+(c_\infty(T)-\varepsilon)^\frac{p}{2}} , \quad \hat{j}_k:= \frac{k (c_\infty(K)-\varepsilon)^\frac{p}{2}}{(c_\infty(K)-\varepsilon)^\frac{p}{2}+(c_\infty(T)-\varepsilon)^\frac{p}{2}} , $$
	and hence,
	\begin{align*}
	c^k_{\rm EH}(K \times_p T) &> k\left((c_\infty(K)-\varepsilon)^{-\frac{p}{2}} + (c_\infty(T)-\varepsilon)^{-\frac{p}{2}}\right)^{-\frac{2}{p}}.
	\end{align*}
	Consequently,
	$$ 	\frac{c^k_{\rm EH}(K \times_p T)}{k} > \left(c_\infty(K)^{-\frac{p}{2}} + c_\infty(T)^{-\frac{p}{2}}\right)^{-\frac{2}{p}} + O(\varepsilon) .$$
	Suppose now that $\hat{i}_k\leq i_0$, which means $\hat{j}_k \geq k - i_0$, for every $k$. For every $\varepsilon>0$, take a large enough $k$ so that
	\begin{align*}
	c^k_{\rm EH}(K \times_p T) = \left[ c^{\hat{i}_k}_{\rm EH}(K)^\frac{p}{p-2} + c^{\hat{j}_k}_{\rm EH}(T)^\frac{p}{p-2} \right]^\frac{p-2}{p} > \left[ c_{1}(K)^\frac{p}{p-2} + \left((k-i_0)\left(c_\infty(T)-\varepsilon\right)\right)^\frac{p}{p-2} \right]^\frac{p-2}{p}.
	\end{align*}
	Hence
	$$c_\infty(K \times_p T) = \lim_{k \to \infty} \frac{c^k_{\rm EH}(K \times_p T)}{k} > c_\infty(T) $$
	This contradicts \eqref{eq-upper-bound-c-inf}, since $c_\infty(T) > \left(c_\infty(K)^{-\frac{p}{2}} + c_\infty(T)^{-\frac{p}{2}}\right)^{-\frac{2}{p}} .$ The proof of the theorem is now complete. 
\end{proof}

\section{On the $p$-decomposition of a symplectic ball} \label{sec-p-decomposition}

In this section we prove Corollary~\ref{Cor-p-decomposition-of-the-ball-systolic} and Theorem \ref{thm-application-of-conj-p-products}. We start with the proof of the former.  

\begin{proof}[{\bf Proof of Corollary~\ref{Cor-p-decomposition-of-the-ball-systolic}}] Assume that Conjecture \ref{conj-viterbo} holds. 
The first part of the corollary follows immediately from the fact that if $p \neq 2$ then 
\[ sys_{m+n} (K \times_p T)^{m+n} < sys_n(K)^n sys_m(T)^m \leq 1, \]
so the systolic ratio is strictly smaller than that of the ball. Similarly, if one has  $\widetilde{B} = X \times_2 Y$ and $sys_n(X) < 1$ or $sys_m(Y) < 1$ then
\begin{equation*} sys_{m+n} (\widetilde{B})^{m+n} \leq sys_n(K)^n sys_m(T)^m < 1, 
\end{equation*} which completes the proof of the corollary.  \end{proof}

The following lemma, whose proof is a direct consequence of Theorem 1.2 and Lemma 3.4 in \cite{Gi-Gu}, is key in the proof of Theorem \ref{thm-application-of-conj-p-products}.

\begin{lemma} \label{lem-cap-n-inequality}
    For any convex $K \subset \R^{2n}$, and any $i \in \N$, 
    $$ c^i_{\rm EH}(K) < c^{n+i}_{\rm EH}(K) .$$
\end{lemma}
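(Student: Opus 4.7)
The plan is to treat this as a statement about the \emph{multiplicity} of action values in the Ekeland--Hofer spectral sequence. Recall that $c^k_{\rm EH}(K)$ is defined variationally via an $S^1$-equivariant min-max, and the value $c^k_{\rm EH}(K)$ equals the symplectic action of some closed characteristic (possibly an iterate) on $\partial K$. The non-decreasing sequence $\{c^k_{\rm EH}(K)\}_{k\ge 1}$ therefore enumerates action values of closed characteristics, each listed with a multiplicity governed by the contribution of the corresponding critical set to the equivariant cohomology in the relevant index range.

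The key input from \cite{Gi-Gu} is a bound on this multiplicity. Theorem 1.2 of \cite{Gi-Gu}, combined with the index-action identifications for convex domains, gives a relation between $k$ and the mean Conley--Zehnder index of a closed characteristic whose action realises $c^k_{\rm EH}(K)$; Lemma 3.4 of \cite{Gi-Gu} then controls how many consecutive indices $k$ can share a single action value. Concretely, the statement one extracts is: for any action value $T$ lying in the Ekeland--Hofer spectrum of a convex body $K\subset \R^{2n}$, the set $\{k : c^k_{\rm EH}(K) = T\}$ is a set of at most $n$ consecutive positive integers. Intuitively this reflects the fact that the Conley--Zehnder index spread at a single action level is bounded by $n-1$ for convex domains, so the equivariant min-max scheme ``moves to the next action level'' after at most $n$ steps.

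With this multiplicity bound in hand, the lemma is immediate. Suppose toward contradiction that $c^i_{\rm EH}(K) = c^{n+i}_{\rm EH}(K)$ for some $i$. Since $c^k_{\rm EH}(K)$ is non-decreasing in $k$, this equality forces
\[
c^i_{\rm EH}(K) = c^{i+1}_{\rm EH}(K) = \cdots = c^{i+n}_{\rm EH}(K),
\]
exhibiting $n+1$ consecutive indices that share a single action value, contradicting the multiplicity bound of at most $n$. Hence the inequality is strict, as claimed.

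The main obstacle is reconciling the precise wording of Theorem 1.2 and Lemma 3.4 in \cite{Gi-Gu}---formulated in terms of mean indices and iterations of closed Reeb orbits on the boundary of a convex domain---with the ``action spectrum multiplicity'' statement used above. Once this dictionary between the Conley--Zehnder index spread at a fixed action level and the number of equal consecutive Ekeland--Hofer capacities is set up, no further argument is required, and the lemma follows as a direct corollary.
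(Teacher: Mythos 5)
Your proposal follows essentially the same route as the paper, which gives no independent argument but simply derives the lemma as a direct consequence of Theorem 1.2 and Lemma 3.4 of \cite{Gi-Gu}; your reduction of the strict inequality to the statement that at most $n$ consecutive indices can share a single action value, via monotonicity of $c^k_{\rm EH}$ and pigeonhole, is exactly the intended reading of that citation. The only caveat is that, like the paper, you import the multiplicity bound itself from \cite{Gi-Gu} as a black box rather than verifying the index--action dictionary you describe.
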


\begin{proof}[{\bf Proof of Theorem \ref{thm-application-of-conj-p-products}}]
	Assume without loss of generality that $r=1$ and $n \leq m$.
	Assume, by contradiction, that $B^{2n+2m}[1] = X \times_p Y$.
	First consider the case $p > 2$. From Conjecture \ref{Conj-EH-cap-of-p-products} one has 
	$$ c^k_{\rm EH}(X \times_p Y) = \min_{i+j = k} \left[ c^i_{ \rm EH}(X)^{\frac{p}{p-2}} + c^j_{\rm EH}(Y)^{\frac{p}{p-2}} \right]^\frac{p-2}{p} \leq \min \{c^k_{\rm EH}(X), c^k_{\rm EH}(Y)\},$$
	and hence $c^k_{ \rm EH}(X) \geq 1$, $c^k_{\rm EH}(Y) \geq 1$ for any $k \leq m+n$.
	Consider $k = m + 1$. Using Conjecture \ref{Conj-EH-cap-of-p-products} again, we get that
	$$ 1 = c^{m+1}_{\rm EH}(X \times_p Y) = \min_{i+j = m+1} \left[ c^i_{\rm EH}(X)^{\frac{p}{p-2}} + c^j_{\rm EH}(Y)^{\frac{p}{p-2}} \right]^\frac{p-2}{p}. $$
	Note that for $0 < i,j \leq m$ one has
	$$ \left[ c^i_{\rm EH}(X)^{\frac{p}{p-2}} + c^j_{\rm EH}(Y)^{\frac{p}{p-2}} \right]^\frac{p-2}{p} > c^i_{\rm EH}(X) \geq 1, $$
	and using Lemma \ref{lem-cap-n-inequality} for $i = m+1, \, j=0$, and $i=0, \, j = m+1$, one has
	$$ c^{m+1}_{\rm EH}(X) > c^1_{\rm EH}(X) \geq 1,\quad c^{m+1}_{\rm EH}(Y) > c^1_{\rm EH}(Y) \geq 1, $$
	which is a contradiction. Next, 
	for $1 \leq p < 2$, assume that $i,j$ are maximizers in the formula from Conjecture \ref{Conj-EH-cap-of-p-products} for the index $k = m+n$, i.e.,
	$$ 1 = c^{m+n}_{\rm EH}(X \times_p Y) = \left[ c^i_{\rm EH}(X)^{\frac{p}{p-2}} + c^j_{\rm EH}(Y)^{\frac{p}{p-2}} \right]^\frac{p-2}{p}. $$
	Since $i+j = m+n+1$ we may assume without loss of generality that $i > n$, and hence Lemma \ref{lem-cap-n-inequality} gives $c^i_{\rm EH}(X) > c^1_{\rm EH}(X)$. In addition note that
	$$ 1 = c^1_{\rm EH}(X \times_p Y) = \left[ c^1_{\rm EH}(X)^{\frac{p}{p-2}} + c^1_{\rm EH}(Y)^{\frac{p}{p-2}} \right]^\frac{p-2}{p}. $$
	Thus, we conclude that
	\begin{align*}
	 1 &= \left[ c^i_{\rm EH}(X)^{\frac{p}{p-2}} + c^j_{\rm EH}(Y)^{\frac{p}{p-2}} \right]^\frac{p-2}{p} > \left[ c^1_{\rm EH}(X)^{\frac{p}{p-2}} + c^j_{\rm EH}(Y)^{\frac{p}{p-2}} \right]^\frac{p-2}{p} \\
	 &\geq \left[ c^1_{\rm EH}(X)^{\frac{p}{p-2}} + c^1_{\rm EH}(Y)^{\frac{p}{p-2}} \right]^\frac{p-2}{p} = 1,
	\end{align*}
	which is again a contradiction. This completes the first part of the theorem. 
	
	Finally, when $p=2$, note that $c^k_{\rm EH}(X\times_2 Y)$ is the $k^{\rm th}$ term in the sequence  $\{c^i_{\rm EH}(X), c^j_{\rm EH}(Y)\}$ arranged in non-decreasing order with repetitions. Denote the elements in this sequence by $M_k$. 
	From the fact that $c^k_{\rm EH}(X\times_2 Y) = 1$ for $1\leq k \leq m+n$, one has that $M_1=M_2=\ldots=M_{m+n}=1$. By means of Lemma \ref{lem-cap-n-inequality} we get that the only way this is possible is if 
	$$c^1_{\rm EH}(X)=\ldots=c^n_{\rm EH}(X)=1, \ \ {\rm and} \ \ c^1_{\rm EH}(Y) = \ldots = c^m_{\rm EH}(Y)=1.$$ Next, using the fact that $M_{m+n+1}=\ldots=M_{2m+2n}=2$ we get again by invoking Lemma \ref{lem-cap-n-inequality} that $$c^{n+1}_{\rm EH}(X)=\ldots=c^{2n}_{\rm EH}(X)=2 \ \ {\rm  and} \ \ c^{m+1}_{\rm EH}(Y) = \ldots = c^{2m}_{\rm EH}(Y)=2.$$
	Continuing this argument by induction completes the proof.
\end{proof}

\section{Appendix}
Denote by $M_k(K,T)$ the $k$-th term in the sequence $A(K,T) := \{c^i_{\rm EH}(K),c^j_{\rm EH}(T)\}$, arranged in non-decreasing order with repetitions.
\begin{lemma} \label{combinatorial-lemma}
	\label{formulas_coincide}
	The two expressions in Conjecture \ref{Conj-EH-cap-of-p-products} 
	%for $\ehzcap(K \times_p T)$ for $p\geq2$ and for $p\leq2$ respectively 
	coincide for $p=2$, and moreover they are equal to $M_k(K,T)$, i.e.
	$$ \min_{i+j=k} \max \{c^i_{\rm EH}(K),c^j_{\rm EH}(T)\} = \max_{i+j=k+1} \min \{c^i_{\rm EH}(K),c^j_{\rm EH}(T)\} = M_k(K,T). $$
\end{lemma}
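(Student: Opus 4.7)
The plan is to treat the claim as a purely combinatorial identity about two non-decreasing sequences $a_i := c^i_{\rm EH}(K)$ and $b_j := c^j_{\rm EH}(T)$, where $M_k(K,T)$ is the $k$-th entry of the merged multiset $\{a_i : i \geq 1\} \cup \{b_j : j \geq 1\}$ arranged in non-decreasing order. Both equalities will be reduced to counting arguments comparing partial sums of the two sequences.

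For $\alpha_k := \min_{i+j=k,\,i,j\geq 0}\max\{a_i,b_j\} = M_k$ (with the convention $a_0 = b_0 = 0$), both inequalities are immediate. For any split $i+j=k$, the $k$ elements $a_1,\ldots,a_i,b_1,\ldots,b_j$ are all $\leq \max\{a_i,b_j\}$, which shows $M_k \leq \alpha_k$; conversely, letting $I$ be the number of $a$'s among the first $k$ merged entries and $J := k - I$ makes $M_k = \max\{a_I,b_J\}$, so $\alpha_k \leq M_k$.

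The heart of the argument is the second equality $\beta_k := \max_{i+j=k+1,\,i,j\geq 1}\min\{a_i,b_j\} = M_k$. For $\beta_k \leq M_k$, I would fix an admissible pair $(i,j)$ and assume WLOG that $a_i \leq b_j$, so the minimum equals $a_i$; monotonicity gives at most $i-1$ indices $\ell$ with $a_\ell < a_i$, and the chain $b_\ell < a_i \leq b_j$ forces $\ell < j$, giving at most $j-1$ indices $\ell$ with $b_\ell < a_i$, for a total of at most $k-1$ merged terms strictly below $a_i$; hence $a_i \leq M_k$. For $\beta_k \geq M_k$, I exhibit an explicit optimal split: define $I := |\{\ell \geq 1 : a_\ell < M_k\}|$ and $J := |\{\ell \geq 1 : b_\ell < M_k\}|$, so $I + J \leq k-1$, and set $(i,j) := (I+1,\, k-I)$. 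Then $i+j = k+1$, $i \geq 1$, $j = k - I \geq 1$ (using $I \leq k-1$), $a_{I+1} \geq M_k$ by definition of $I$, and $k - I \geq J + 1$ implies $b_{k-I} \geq b_{J+1} \geq M_k$, so $\min\{a_i,b_j\} \geq M_k$.

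The only mild subtlety is handling ties among the capacity values. This is addressed uniformly by using the strict inequalities $a_\ell < a_i$, $b_\ell < a_i$, $a_\ell < M_k$, $b_\ell < M_k$ throughout the counting step, which makes the arguments insensitive to repetitions. Beyond that, the proof is a matter of index bookkeeping, and I do not anticipate any serious obstacle.
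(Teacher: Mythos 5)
Your argument is correct and follows essentially the same counting strategy as the paper's own proof: each of the four inequalities is established by bounding the number of merged terms lying (strictly) below the relevant capacity value, using the monotonicity of the sequences $c^i_{\rm EH}(K)$ and $c^j_{\rm EH}(T)$. The only cosmetic differences are that you handle ties directly via strict inequalities rather than by first passing to an auxiliary tie-breaking total order, and that you exhibit the optimal index splits explicitly where the paper argues two of the four bounds by contradiction.
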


\begin{proof}[{\bf Proof of Lemma~\ref{combinatorial-lemma}}]
	
	First, we may assume that all elements in $A(K,T)$ are unique. Otherwise, one can assign an arbitrary strict total order $\prec$ to $A(K,T)$ that satisfies that if $a,b \in A(K,T)$ and $a<b$ then $a \prec b$, and $c^i_{\rm EH}(K) \prec c^j_{\rm EH}(K)$ and $c^i_{\rm EH}(T) \prec c^j_{\rm EH}(T)$ for all $i < j$. Then one can continue the proof where each time we consider inequalities between elements of $A(K,T)$ one can consider instead the same inequalities with $\prec$.
	
	We start with the case $\min_{i+j=k} \max \{c^i_{\rm EH}(K),c^j_{\rm EH}(T)\} \geq M_k(K,T).$
	Let $i,j$ be the minimizers in the above inequality, and assume without loss of generality that $c^i_{\rm EH}(K) \geq c^j_{\rm EH}(T)$.
	Then the maximum is $c^i_{\rm EH}(K)$, and there are at least $k$ elements in $\{c^l_{\rm EH}(K)\}_{l=1}^\infty \cup \{c^l_{\rm EH}(T) \}_{l=1}^\infty$ that are smaller than $c^i_{\rm EH}(K)$. Indeed, $c^1_{\rm EH}(K), \ldots, c^i_{\rm EH}(K)$ are $i$ such elements, and because $c^i_{\rm EH}(K) \geq c^j_{\rm EH}(T)$, one has that $c^1_{\rm EH}(T), \ldots c^j_{\rm EH}(T) \leq c^i_{\rm EH}(K)$ and hence there are additional $j$ such elements and overall there are at least $i+j=k$ such elements.
	Now for the case $\min_{i+j=k} \max \{c^i_{\rm EH}(K),c^j_{\rm EH}(T)\} \leq M_k(K,T),$ 
	assume without loss of generality that the k-th element is $c^i_{\rm EH}(K)$ for some $i$. Obviously one has that $i\leq k$. Put $j=k-i$. It is enough to show that $c^i_{\rm EH}(K) \geq c^j_{\rm EH}(T)$. If $c^j_{\rm EH}(T) > c^i_{\rm EH}(K)$ then there are less than $k$ elements that are smaller than $c^i_{\rm EH}(K)$. Indeed, $c^{i+1}_{\rm EH}(K) > c^i_{\rm EH}(K)$, and there are at most $j-1$ elements $c^1_{\rm EH}(T),\ldots,c^{j-1}_{\rm EH}(T)$ that are less than $c^i_{\rm EH}(K)$ and hence there are at most $i+j-1<k$ such elements. Hence $c^i_{\rm EH}(K) \geq c^j_{\rm EH}(T)$ as required.

	The next case to consider is $\max_{i+j=k+1} \min \{c^i_{\rm EH}(K),c^j_{\rm EH}(T)\} \leq M_k(K,T)$.
	Let $i,j$ be the maximizers in the above inequality, and assume without loss of generality that $c^i_{\rm EH}(K) \leq c^j_{\rm EH}(T)$.
	Then there are at most $k-1$ elements that are strictly smaller than $c^i_{\rm EH}(K)$. Indeed, $c^1_{\rm EH}(K),\ldots,c^{i-1}_{\rm EH}(K)$ are $i-1$ such elements, and $c^1_{\rm EH}(T),\ldots, c^{j-1}_{\rm EH}(T)$ are $j-1$ such elements, so overall there are at most $i-1+j-1=k-1$ such elements and $c^i_{\rm EH}(K) \leq M_k(K,T)$.
	Finally let us deal with the case $\max_{i+j=k+1} \min \{c^i_{\rm EH}(K),c^j_{\rm EH}(T)\} \geq M_k(K,T).$
	Assume without loss of generality that the $k$-th element is $M_k(K,T)=c^i_{\rm EH}(K)$ for some $i$.
	Put $j=k+1-i$. Then $c^j_{\rm EH}(T) \geq c^i_{\rm EH}(K)$, otherwise there are $j$ elements $c^1_{\rm EH}(T),\ldots,c^j_{\rm EH}(T)$ that are strictly smaller than $c^i_{\rm EH}(K)$ plus $i-1$ elements $c^1_{\rm EH}(K),\ldots, c^{i-1}_{\rm EH}(K)$ that are strictly smaller than $c^i_{\rm EH}(K)$, and hence there are $i-1+j=k$ elements that are strictly smaller than $c^i_{\rm EH}(K)$, which means that $c^i_{\rm EH}(K) > M_k(K,T)$. Therefore,  $$\max_{i+j=k+1} \min \{c^i_{\rm EH}(K),c^j_{\rm EH}(T)\} \geq \min \{c^i_{\rm EH}(K),c^j_{\rm EH}(T)\} = c^i_{\rm EH}(K) = M_k(K,T),$$
	and the proof of the lemma is thus complete. 
\end{proof}

\noindent Pazit Haim-Kislev \\
\noindent School of Mathematical Sciences, Tel Aviv University, Israel \\
\noindent e-mail: pazithaim@mail.tau.ac.il
\vskip 10pt
\noindent Yaron Ostrover \\
\noindent School of Mathematical Sciences, Tel Aviv University, Israel \\
\noindent e-mail: ostrover@tauex.tau.ac.il

\end{document}